\tikzset{->-/.style={decoration={
  markings,
  mark=at position .5 with {\arrow{>}}},postaction={decorate}}}
\newcolumntype{C}[1]{>{\centering\let\newline\\\arraybackslash\hspace{0pt}}m{#1}}
\newtheorem{theorem}{Theorem}[section]
\newtheorem{lemma}[theorem]{Lemma}
\newtheorem{example}[theorem]{Example}
\newtheorem{corollary}[theorem]{Corollary}
\newtheorem{proposition}[theorem]{Proposition}
\newcommand\floor[1]{\lfloor#1\rfloor}
\DeclareMathOperator{\svt}{\mathbb{S}}
\DeclareMathOperator{\im}{im}
\begin{document}

\title{\bf Set-Valued Tableaux \& Generalized Catalan Numbers}
\author{Paul Drube\\
\small Department of Mathematics and Statistics\\[-0.8ex]
\small Valparaiso University\\[-0.8ex] 
\small Valparaiso, Indiana, U.S.A.\\
\small\tt paul.drube@valpo.edu\\
}

\maketitle

\begin{abstract}
Standard set-valued Young tableaux are a generalization of standard Young tableaux in which cells may contain more than one integer, with the added conditions that every integer at position $(i,j)$ must be smaller than every integer at positions $(i,j+1)$ and $(i+1,j)$.  This paper explores the combinatorics of standard set-valued Young tableaux with two-rows, and how those tableaux may be used to provide new combinatorial interpretations of generalized Catalan numbers.  New combinatorial interpretations are provided for the two-parameter Fuss-Catalan numbers (Raney numbers), the rational Catalan numbers, and the solution to the so-called ``generalized tennis ball problem".  Methodologies are then introduced for the enumeration of standard set-valued Young tableaux, prompting explicit formulas for the general two-row case.  The paper closes by drawing a bijection between arbitrary classes of two-row standard set-valued Young tableaux and collections of two-dimensional lattice paths that lie weakly below a unique maximal path.

\bigskip\noindent \textbf{AMS Subject Classifications:} 05A19, 05A05\\
\textbf{Keywords:} Young tableau, set-valued Young tableau, Dyck path, $k$-ary tree, $k$-Catalan number, Fuss-Catalan number, tennis-ball problem
\end{abstract}

\section{Introduction}
\label{sec: intro}

For a non-increasing integer partition $\lambda = (\lambda_1,\lambda_2,\hdots,\lambda_m)$, a Young diagram $Y$ of shape $\lambda$ is a left-justified array of cells with exactly $\lambda_i$ cells in its $i^{th}$ row.  If $Y$ is a Young diagram of shape $\lambda$ with $\sum_i \lambda_i = n$, a Young tableau of shape $\lambda$ is an assignment of the integers $[n] = \lbrace 1,\hdots,n \rbrace$ to the cells of $Y$ such that every integer is used precisely once.  A Young tableau in which integers increase from top-to-bottom down every column and increase from left-to-right across every row is said to be a standard Young tableau.  We denote the set of all standard Young tableaux of shape $\lambda$ by $S(\lambda)$.  For $m$-row rectangular shapes $\lambda = (n,\hdots,n)$ we use the abbreviated notation $S(n^m)$.  For a comprehensive introduction to Young tableaux, see Fulton \cite{Fulton}.

Let $Y$ be a Young diagram of shape $\lambda$, and let $\rho = \lbrace \rho_{i,j} \rbrace$ be a collection of positive integers such that $\sum_{i,j} \rho_{i,j} = m$.  A \textbf{set-valued tableau} of shape $\lambda$ and density $\rho$ is a function from $[m]$ to the cells of $Y$ such that the cell at position $(i,j)$ receives a set of $\rho_{i,j}$ integers.  A set-valued tableau is said to be a \textbf{standard set-valued Young tableau} if we additionally require that every integer at position $(i,j)$ is smaller than every integer at positions $(i+1,j)$ and $(i,j+1)$.  In analogy with standard Young tableaux, we refer to these added conditions as ``column-standardness" and ``row-standardness".  We denote the set of all standard set-valued Young tableaux of shape $\lambda$ and density $\rho$ by $\svt(\lambda,\rho)$.  See Figure \ref{fig: set-valued tableaux basic example} for a basic example.

\begin{figure}[ht!]
\centering
\begin{ytableau}
1 \kern+2pt 2 & 3 \kern+2pt 4 \\
5 \kern+2pt 6 & 7 \kern+2pt 8
\end{ytableau}
\hspace{.3in}
\begin{ytableau}
1 \kern+2pt 2 & 3 \kern+2pt 5 \\
4 \kern+2pt 6 & 7 \kern+2pt 8
\end{ytableau}
\hspace{.3in}
\begin{ytableau}
1 \kern+2pt 2 & 3 \kern+2pt 6 \\
4 \kern+2pt 5 & 7 \kern+2pt 8
\end{ytableau}
\hspace{.3in}
\begin{ytableau}
1 \kern+2pt 2 & 4 \kern+2pt 5 \\
3 \kern+2pt 6 & 7 \kern+2pt 8
\end{ytableau}
\hspace{.3in}
\begin{ytableau}
1 \kern+2pt 2 & 4 \kern+2pt 6 \\
3 \kern+2pt 5 & 7 \kern+2pt 8
\end{ytableau}
\hspace{.3in}
\begin{ytableau}
1 \kern+2pt 2 & 5 \kern+2pt 6 \\
3 \kern+2pt 4 & 7  \kern+2pt  8
\end{ytableau}

\caption{The set $\svt(\lambda,\rho)$ when $\lambda = (2,2)$ and $\rho_{i,j} = 2$ for all $i,j$.}
\label{fig: set-valued tableaux basic example}
\end{figure}

Set-valued tableaux were introduced by Buch \cite{Buch} in his investigation of the K-theory of Grassmannians.  More directly influencing this paper is the work of Heubach, Li and Mansour \cite{HLM}, who argued that the cardinality of $\svt(n^2,\rho)$ with row-constant density $\rho_{1,j}=k-1$, $\rho_{2,j}=1$ equalled the $k$-Catalan number $C_n^k$.  For a more recent appearance of standard set-valued tableaux see Reiner, Tenner and Yong \cite{RTY}, who investigated so-called ``barely set-valued tableaux" with a single non-unitary density $\rho_{i,j} = 2$ (not necessarily located at a fixed position $i,j$).

Currently, the central difficulty in studying standard set-valued Young tableaux is the lack of a closed formula for enumerating general $\svt(\lambda,\rho)$: there is no known set-valued analogue of the celebrated hook-length formula for standard Young tableaux.  Reiner, Tenner and Yong \cite{RTY} do utilize a modified insertion algorithm to enumerate their ``barely set-valued tableaux", but theirs is an atypically tractable case and cannot be modified to the enumeration of sets $\svt(\lambda,\rho)$ with a fixed density at each position.

The purpose of this paper is twofold.  In Section \ref{sec: generalized Catalan numbers}, we utilize standard set-valued Young tableaux to provide new combinatorial interpretations for various generalizations of the Catalan numbers.  In particular, we show how various densities for rectangular set-valued tableaux of shape $\lambda=n^2$ are enumerated by the Raney numbers (two-parameter Fuss Catalan numbers, Theorem \ref{thm: Raney numbers}) and the solution to the "$(s,t)$-tennis ball problem" of Merlini, Sprugnoli, and Verri \cite{MSV} (Theorem \ref{thm: tennis ball problem}).  See Figure \ref{fig: weights preview} for an overview of the various densities needed to achieve our combinatorial interpretations.  In Section \ref{sec: enumeration}, we develop methodologies for enumerating two-row standard set-valued tableaux.  Concise closed formulas are presented for the number of such tableaux of arbitrary density (Theorem \ref{thm: repeated density shifting}).  In Section \ref{sec: lattice paths}, we conclude by drawing a bijection between two-row standard set-valued tableaux of arbitrary density and certain classes of two-dimensional integer lattice paths with ``East" $E=(1,0)$ and ``North" $N=(0,1)$.  In particular, $\svt(n^2,\rho)$ with $\rho_{1,j} = a_j$ and $\rho_{2,j} = b_j$ is placed in bijection with all such lattice paths that lie weakly below the lattice path $P = E^{a_1} N^{b_1} E^{a_2} N^{b_2} \hdots$ (Theorem \ref{thm: tableaux vs lattice paths}).

\begin{figure}[ht!]
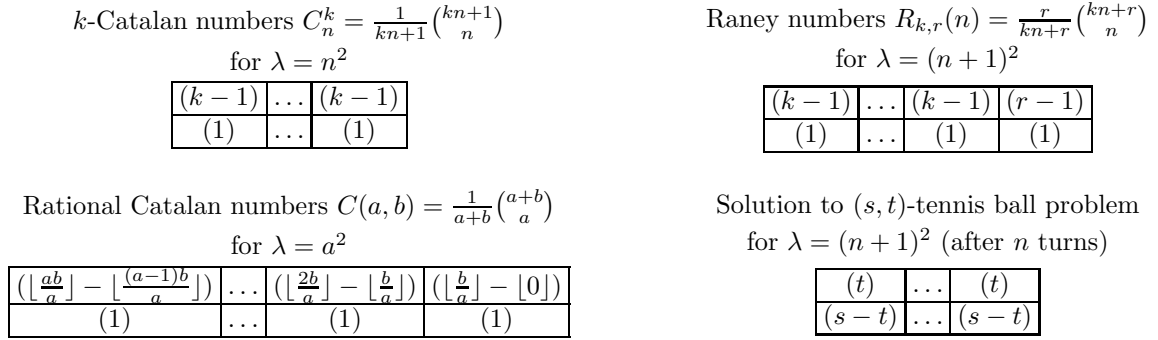

\begin{subfigure}[b]{0.58\textwidth}
\centering
$k$-Catalan numbers $C_n^k = \frac{1}{kn + 1} \binom{kn+1}{n}$\\
\vspace{.025in}
for $\lambda = n^2$\\
\vspace{.05in}
\setlength{\tabcolsep}{2.5pt}
\begin{tabular}{|>{$}c<{$}|>{$}c<{$}|>{$}c<{$}|}
\hline
(k-1) & \hdots & (k-1) \\ \hline
(1) & \hdots & (1) \\ \hline
\end{tabular}\\
\vspace{.2in}
Rational Catalan numbers $C(a,b) = \frac{1}{a+b} \binom{a+b}{a}$\\
\vspace{.025in}
for $\lambda = a^2$\\
\vspace{.05in}
\setlength{\tabcolsep}{2.5pt}
\begin{tabular}{|>{$}c<{$}|>{$}c<{$}|>{$}c<{$}|>{$}c<{$}|}
\hline
(\floor{\frac{ab}{a}} - \floor{\frac{(a-1)b}{a}}) & \hdots & (\floor{\frac{2b}{a}} - \floor{\frac{b}{a}}) & (\floor{\frac{b}{a}} - \floor{0}) \\ \hline
(1) & \hdots & (1) & (1) \\ \hline
\end{tabular}
\end{subfigure}
\begin{subfigure}[b]{0.40\textwidth}
\centering
Raney numbers $R_{k,r}(n) = \frac{r}{kn+r} \binom{kn+r}{n}$\\
\vspace{.025in}
for $\lambda = (n+1)^2$\\
\vspace{.05in}
\setlength{\tabcolsep}{2.5pt}
\begin{tabular}{|>{$}c<{$}|>{$}c<{$}|>{$}c<{$}|>{$}c<{$}|}
\hline
(k-1) & \hdots & (k-1) & (r-1)\\ \hline
(1) & \hdots & (1) & (1) \\ \hline
\end{tabular}\\
\vspace{.2in}
Solution to $(s,t)$-tennis ball problem \\
\vspace{.025in}
for $\lambda = (n+1)^2$ (after $n$ turns)\\
\vspace{.05in}
\setlength{\tabcolsep}{2.5pt}
\begin{tabular}{|>{$}c<{$}|>{$}c<{$}|>{$}c<{$}|}
\hline
(t) & \hdots & (t) \\ \hline
(s-t) & \hdots & (s-t) \\ \hline
\end{tabular}
\end{subfigure}
\caption{Densities $\rho$ needed for $\vert \svt(\lambda,\rho) \vert$ to yield various combinatorial interpretations.}
\label{fig: weights preview}
\end{figure}

We pause to introduce a foundational result that, in the case of rectangular $\lambda$, serves as a set-valued analogue of the Sch\"utzeberger involution for standard Young tableaux.  In the case of densities $\rho$ that are constant across each row, notice that Proposition \ref{thm: Schutzenberger involution} manifests as invariance under a vertical reflection of those densities.

\begin{proposition}
\label{thm: Schutzenberger involution}
For rectangular $\lambda = n^m$ and any density $\rho = \lbrace \rho_{i,j} \rbrace$, let $r(\rho) = \lbrace \rho_{n-i+1,m-j+1} \rbrace$.  Then $\vert \svt(\lambda,\rho) \vert = \vert \svt(\lambda,r(\rho)) \vert$.
\end{proposition}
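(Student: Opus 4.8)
The plan is to produce an explicit bijection $\svt(\lambda,\rho)\to\svt(\lambda,r(\rho))$ by ``rotating and complementing'' — the set-valued shadow of evacuation for rectangular standard Young tableaux. Put $N=\sum_{i,j}\rho_{i,j}$, so that each $T\in\svt(\lambda,\rho)$ assigns the integers $[N]$, and write $T(i,j)$ for the set of integers in cell $(i,j)$ of $T$. Define $\Phi(T)$ to be the filling of $\lambda=n^m$ whose cell $(i,j)$ carries the set $\{\,N+1-x : x\in T(m+1-i,\,n+1-j)\,\}$. (I read $r(\rho)_{i,j}=\rho_{m+1-i,\,n+1-j}$; rectangularity of $\lambda$ is exactly what makes the $180^\circ$ rotation $(i,j)\mapsto(m+1-i,\,n+1-j)$ a symmetry of the diagram, so $\Phi(T)$ again has shape $\lambda$.)

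Next I would verify that $\Phi(T)\in\svt(\lambda,r(\rho))$ on the nose. Cell $(i,j)$ of $\Phi(T)$ has $|T(m+1-i,n+1-j)|=\rho_{m+1-i,\,n+1-j}=r(\rho)_{i,j}$ entries, and since $x\mapsto N+1-x$ permutes $[N]$, every integer of $[N]$ occurs exactly once in $\Phi(T)$. For standardness, recall that the row- and column-conditions on $T$ say that whenever a cell $C'$ lies immediately above, or immediately to the left of, a cell $C''$, every entry of $C'$ is smaller than every entry of $C''$. If, in $\Phi(T)$, cell $(i,j)$ lies immediately above $(i+1,j)$, or immediately left of $(i,j+1)$, then under the rotation these two cells correspond to the cells $(m+1-i,\,n+1-j)$ and $(m-i,\,n+1-j)$, resp.\ $(m+1-i,\,n-j)$, of $T$, in which the former now lies \emph{below}, resp.\ to the \emph{right of}, the latter; hence in $T$ every entry of the latter is smaller than every entry of the former, and applying the order-reversing map $x\mapsto N+1-x$ turns this around into precisely the inequality demanded of $\Phi(T)$. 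So $\Phi$ is well-defined.

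Finally, running the identical recipe on $\svt(\lambda,r(\rho))$ produces a map back to $\svt(\lambda,\rho)$, and since $(i,j)\mapsto(m+1-i,n+1-j)$ and $x\mapsto N+1-x$ are both involutions, this map is a two-sided inverse of $\Phi$; hence $\Phi$ is a bijection and $|\svt(\lambda,\rho)|=|\svt(\lambda,r(\rho))|$. I expect no genuine obstacle here: the only point demanding care is bookkeeping the two simultaneous reversals — of cell positions and of entry values — so as to confirm that their combined effect on every standardness inequality is the identity, leaving no entry ``pointing the wrong way.''
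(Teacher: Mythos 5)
Your proposal is correct and is exactly the paper's argument: the paper's one-line proof defines the same map (reverse the alphabet $x\mapsto N+1-x$ and rotate the rectangular tableau by $180$ degrees), and you have simply spelled out the routine verification of standardness and invertibility. Your reading of the rotated density as $r(\rho)_{i,j}=\rho_{m+1-i,\,n+1-j}$ is the intended one (the paper's stated index convention appears to transpose $m$ and $n$).
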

\begin{proof}
One may define a bijection $f : S(\lambda,\rho) \rightarrow S(\lambda,r(\rho))$ such that $f(T) \in S(\lambda,r(\rho))$ is obtained by reversing the alphabet of $T \in S(\lambda,\rho)$ and rotating the resulting tableau by 180-degrees.
\end{proof}

\section{Generalized Catalan Numbers and Set-Valued Tableaux}
\label{sec: generalized Catalan numbers}

We begin be briefly recapping established results about the $k$-Catalan numbers.  For any $k \geq 1$, the $k$-Catalan numbers are given by $C_n^k = \frac{1}{kn+1} \binom{kn+1}{n}$ for all $n \geq 0$.  Notice that the $k$-Catalan numbers specialize to the usual Catalan numbers when $k = 2$.

See Hilton and Pedersen \cite{HP} or Heubach, Li and Mansour \cite{HLM} for various combinatorial interpretations of the $k$-Catalan numbers.  Relevant to our work is the standard result that $C_n^k$ enumerates the set $\mathcal{D}_n^k$ of so-called $k$-good paths of length $kn$.  These are integer lattice paths from $(0,0)$ to $(n,(k-1)n)$ that utilize only ``East" $E=(1,0)$ and ``North" $N = (0,1)$ steps and which stay weakly below the line $y = (k-1)x$.  The set of $k$-good paths are obviously in bijection with $k$-ary paths of length $kn$: integer lattice paths from $(0,0)$ to $(nk,0)$ that use steps $u=(1,\frac{1}{k-1})$, $d=(1,-1)$ and stay weakly above $y=0$.  We prefer working with $k$-good paths because they are more easily generalized to sets enumerated by the Raney numbers and the rational Catalan numbers. 

Heubach, Li and Mansour \cite{HLM} showed that standard set-valued Young tableaux of shape $\lambda = n^2$ and row-constant density $\rho_{1,j} = k-1$, $\rho_{2,j} = 1$ were counted by $C_n^k$.  This was done by placing such tableaux in bijection with $k$-ary paths of length $kn$.  Using Proposition \ref{thm: Schutzenberger involution}, their map may be modified to give an elementary bijection between $\mathcal{D}_n^k$ and standard set-valued Young tableaux $S(n^2,\rho)$ of row-constant density $\rho_{1,j} = 1$, $\rho_{2,j} = k-1$.  As exemplified in Figure \ref{fig: k-Catalan interpretations}, this bijection $\phi: \mathcal{D}_n^k \rightarrow S(\lambda,w)$ involves associating East steps in $P \in \mathcal{D}_n^k$ with first-row entries in $\phi(P)$ and North steps in $P$ with second-row entries in $\phi(P)$.

\begin{figure}[ht!]
\centering
\raisebox{53pt}{\begin{ytableau}
1 & 3 & 7 \\
2 \kern+2pt 4 & 5 \kern+2pt 6 & 8 \kern+2pt 9
\end{ytableau}}
\hspace{.2in}
\raisebox{44pt}{\scalebox{3}{$\Leftrightarrow$}}
\hspace{.2in}
\scalebox{.9}{
\begin{tikzpicture}
	[scale=.6,auto=left,every node/.style={circle, fill=black,inner sep=1.2pt}]
	\draw[dotted] (0,0) to (3,0);
	\draw[dotted] (0,1) to (3,1);
	\draw[dotted] (0,2) to (3,2);
	\draw[dotted] (0,3) to (3,3);
	\draw[dotted] (0,4) to (3,4);
	\draw[dotted] (0,5) to (3,5);
	\draw[dotted] (0,6) to (3,6);
	\draw[dotted] (0,0) to (0,6);
	\draw[dotted] (1,0) to (1,6);
	\draw[dotted] (2,0) to (2,6);
	\draw[dotted] (3,0) to (3,6);
	\draw (0,0) to (3,6);
	\node (0*) at (0,0) {};
	\node (1*) at (1,0) {};
	\node (2*) at (1,1) {};
	\node (3*) at (2,1) {};
	\node (4*) at (2,2) {};
	\node (5*) at (2,3) {};
	\node (6*) at (2,4) {};
	\node (7*) at (3,4) {};
	\node (8*) at (3,5) {};
	\node (9*) at (3,6) {};
	\node[fill=none] (1l) at (.55,.25) {\scalebox{.85}{1}};
	\node[fill=none] (2l) at (1.25,.45) {\scalebox{.85}{2}};
	\node[fill=none] (3l) at (1.5,1.275) {\scalebox{.85}{3}};
	\node[fill=none] (4l) at (2.25,1.45) {\scalebox{.85}{4}};
	\node[fill=none] (5l) at (2.25,2.45) {\scalebox{.85}{5}};
	\node[fill=none] (6l) at (2.25,3.45) {\scalebox{.85}{6}};
	\node[fill=none] (7l) at (2.55,4.275) {\scalebox{.85}{7}};
	\node[fill=none] (8l) at (3.25,4.45) {\scalebox{.85}{8}};
	\node[fill=none] (9l) at (3.25,5.45) {\scalebox{.85}{9}};
	\draw[thick] (0*) to (1*);
	\draw[thick] (1*) to (2*);
	\draw[thick] (2*) to (3*);
	\draw[thick] (3*) to (4*);
	\draw[thick] (4*) to (5*);
	\draw[thick] (5*) to (6*);
	\draw[thick] (6*) to (7*);
	\draw[thick] (7*) to (8*);	
	\draw[thick] (8*) to (9*);
\end{tikzpicture}}
\caption{A standard set-valued Young tableau with $\lambda = 3^2$ and $\rho_{1,j} = 1,\rho_{2,j}=k-1$, alongside the corresponding $3$-good path in $\mathcal{D}_3^3$.}
\label{fig: k-Catalan interpretations}
\end{figure}
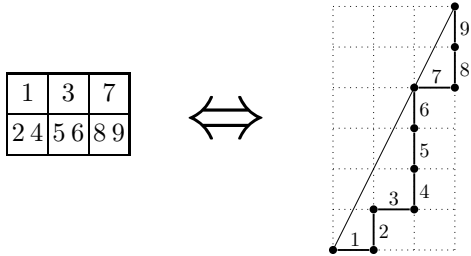

\subsection{Raney numbers}
\label{subsec: Raney numbers}

The first generalization of the Catalan numbers for which we will present a new combinatorial interpretation are the Raney numbers, also known as the two-parameter Fuss-Catalan numbers.  For any $k \geq 1$ and $r \geq 1$, the Raney numbers are given by $R_{k,r}(n) = \frac{r}{kn+r} \binom{kn+r}{n}$ for all $n \geq 0$.\footnote{Hilton and Pedersen \cite{HP} use the alternative notation $d_{qk}(p)=\frac{p-q}{pk-q} \binom{pk-q}{k-1}$ for their two-parameter generalization.  Our two notations are related via the change of variables $R_{p,p-q}(k-1) = d_{qk}(p)$.}  The Raney numbers specialize to the $k$-Catalan numbers as $R_{k,1}(n) = C_n^k$.

Hilton and Pedersen \cite{HP} showed that the Raney numbers could be calculated from the $k$-Catalan numbers via Equation \ref{eq: Raney recurrence}.  This equation may be viewed as a generalization of the standard recurrence for the $k$-Catalan numbers when one notes that $R_{k,k}(n-1) = C_n^k$, a distinct identity from the ``obvious" specialization of Equation \ref{eq: Raney recurrence} to the $k$-Catalan numbers as $R_{k,1}(n) = C_n^k$.

\begin{equation}
\label{eq: Raney recurrence}
R_{k,r}(n) \ = \kern-6pt \sum_{(i_1,\hdots,i_r) \vdash n} \kern-6pt C_{i_1}^k C_{i_2}^k \hdots C_{i_r}^k
\end{equation}

Equation \ref{eq: Raney recurrence} is useful in that it allows one to define combinatorial interpretations for the Raney numbers as ordered $r$-tuples of pre-existing interpretations for the $k$-Catalan numbers:

\begin{proposition}
\label{thm: Raney numbers lemma}
Fix $k,r \geq 1$, $n \geq 0$.  Then $R_{k,r}(n)$ equals the number of ordered $r$-tuples $(T_1,\hdots,T_r)$ such that $T_j \in \svt(i_j^2,\rho)$ for row-constant weight $\rho_{1,j} = 1$, $\rho_{2,j}=k-1$, where $i_1 + \hdots + i_r = n$.
\end{proposition}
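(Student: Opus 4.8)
The plan is to derive this immediately from Equation~\ref{eq: Raney recurrence} together with the $k$-Catalan interpretation of two-row set-valued tableaux recalled just above. The first step is to record the base identity $\vert \svt(i^2,\rho) \vert = C_i^k$ for all $i \geq 0$, where $\rho$ is the row-constant density with $\rho_{1,j} = 1$, $\rho_{2,j} = k-1$. For $i \geq 1$ this is exactly the bijection $\phi : \mathcal{D}_i^k \to \svt(i^2,\rho)$ obtained from the Heubach--Li--Mansour map by way of Proposition~\ref{thm: Schutzenberger involution}, and for $i = 0$ both sides equal $1$ (the empty tableau on one side, and $C_0^k = \tfrac{1}{1}\binom{1}{0} = 1$ on the other), so the edge case is consistent with allowing parts of size zero.

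Next I would count the ordered $r$-tuples directly. For a fixed weak composition $(i_1,\ldots,i_r)$ of $n$ into $r$ nonnegative parts, an $r$-tuple $(T_1,\ldots,T_r)$ with $T_j \in \svt(i_j^2,\rho)$ amounts to an independent choice from each factor, so there are $\prod_{j=1}^{r} \vert \svt(i_j^2,\rho) \vert = \prod_{j=1}^{r} C_{i_j}^k$ of them. Summing over all such compositions yields
\[
\#\bigl\{(T_1,\ldots,T_r) : T_j \in \svt(i_j^2,\rho),\ i_1+\cdots+i_r = n\bigr\} \;=\; \sum_{(i_1,\ldots,i_r) \vdash n} C_{i_1}^k C_{i_2}^k \cdots C_{i_r}^k ,
\]
and the right-hand side is precisely the quantity appearing in Equation~\ref{eq: Raney recurrence}, which equals $R_{k,r}(n)$.

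I do not expect a genuine obstacle; the argument is a one-line bookkeeping step once the $k$-Catalan count is in hand. The only point deserving care is convention-matching: one must read the index set ``$(i_1,\ldots,i_r) \vdash n$'' in Equation~\ref{eq: Raney recurrence} as ordered weak compositions with nonnegative parts, and confirm that the terms with some $i_j = 0$ contribute a factor $C_0^k = 1$ corresponding to the unique empty tableau of shape $0^2$, so that the enumeration of tuples neither loses nor duplicates any configuration.
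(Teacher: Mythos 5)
Your argument is correct and is exactly the reasoning the paper intends: the paper states this proposition without a separate proof, treating it as immediate from Equation~\ref{eq: Raney recurrence} together with the Heubach--Li--Mansour count $\vert \svt(i^2,\rho)\vert = C_i^k$ (via Proposition~\ref{thm: Schutzenberger involution}), which is precisely your bookkeeping over weak compositions. Your attention to the $i_j=0$ terms contributing the empty tableau with $C_0^k=1$ is a worthwhile clarification of the convention but not a departure from the paper's approach.
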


Our goal is to replace the ordered $r$-tuples of Proposition \ref{thm: Raney numbers lemma} with a single set-valued tableau of shape $\lambda = (n+1)^2$.  This utilizes a technique that we refer to ``horizontal tableaux concatenation", whereby the entries of the ordered $r$-tuple are continuously reindexed and a new column with density $\rho_{1,1}=1,\rho_{2,1}=r-1$ is added to the front of the resulting tableau.  This additional column carries the information needed to recover the original partition of the tableau into $r$ pieces.

So fix $n \geq 0$ and take any two-row rectangular shape $\lambda = (n+1)^2$.  To ease notation, for any $k,r \geq 1$ we temporarily define the density $\rho(k,r) = \lbrace \rho_{i,j} \rbrace$ by $\rho_{1,j} = 1$ for all $1 \leq j \leq n$, $\rho_{2,1} = r-1$, and $\rho_{2,j} = k-1$ for all $2 \leq j \leq n$.  Notice that $\rho(k,r)$ is equivalent to the density of Figure \ref{fig: weights preview} via Proposition \ref{thm: Schutzenberger involution}.

\begin{theorem}
\label{thm: Raney numbers}
Take any $k,r \geq 1$, $n \geq 0$, and define $\rho(k,r)$ as above.  Then $R_{k,r}(n) = \vert \svt((n+1)^2,\rho(k,r)) \vert$.
\end{theorem}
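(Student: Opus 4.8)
The plan is to promote Proposition~\ref{thm: Raney numbers lemma} to the theorem by building an explicit bijection $\Phi$ between $\svt((n+1)^2,\rho(k,r))$ and the set of ordered $r$-tuples $(T_1,\dots,T_r)$ appearing there, where $T_j\in\svt(i_j^2,\rho)$ for the row-constant density $\rho_{1,j}=1$, $\rho_{2,j}=k-1$ and $i_1+\dots+i_r=n$. Once $\Phi$ is produced, Proposition~\ref{thm: Raney numbers lemma} gives $\vert\svt((n+1)^2,\rho(k,r))\vert=R_{k,r}(n)$ immediately.

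I would phrase $\Phi$ using reading words. Scanning the alphabet $1,2,\dots$ of a two-row set-valued tableau in order and recording, for each integer, whether it lies in the first row (letter $E$) or the second row (letter $N$) identifies the tableau with a word in $\{E,N\}^{*}$; for $\svt(i^2,\rho)$ with $\rho_{1,j}=1,\rho_{2,j}=k-1$ this is precisely the bijection with the $k$-good paths $\mathcal{D}_i^k$ recalled above, i.e.\ with words having $i$ letters $E$, $(k-1)i$ letters $N$, and $\#N\le(k-1)\#E$ on every prefix. An identical check shows that $\svt((n+1)^2,\rho(k,r))$ is identified with the words having $n+1$ letters $E$ and $(r-1)+n(k-1)$ letters $N$ that lie weakly below the staircase word $P=E\,N^{r-1}(E\,N^{k-1})^{n}$ (equivalently, every prefix has $\#N=0$ if $\#E=0$ and $\#N\le(r-1)+(\#E-1)(k-1)$ if $\#E\ge 1$); in both cases ``prefix under the envelope'' is just a restatement of column- and row-standardness.

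Now define $\Phi^{-1}$ — the ``horizontal tableaux concatenation'' — on an $r$-tuple with reading words $w_1,\dots,w_r$ by
\[
\widehat{w}\;=\;E\;w_1\;N\;w_2\;N\;\cdots\;N\;w_r ,
\]
and let $\Phi^{-1}(T_1,\dots,T_r)$ be the tableau whose reading word is $\widehat w$. In tableau terms this ``continuously reindexes'' the entries of the $T_j$, together with the single new $E$ and the $r-1$ new $N$'s, by their positions in $\widehat w$; the leading $E$ and the $r-1$ inserted $N$'s become a prepended first column of density $\rho_{1,1}=1,\rho_{2,1}=r-1$, and that column records where each $w_j$ begins. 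The one thing to verify is that $\widehat w$ lies weakly below $P$: inside each factor $w_j$ the running excess $\#N-(k-1)\#E$ stays at or below the value it has when $w_j$ starts (this is the defining condition on the $k$-good path $w_j$), and each of the $r-1$ inserted $N$'s raises that value by exactly $1$, so once the initial $E$ has been read the running excess over $\widehat w$ never exceeds $r-k$, which is precisely the envelope of $P$.

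For the inverse, a word $\widehat w$ weakly below $P$ must begin with $E$, and the $r-1$ inserted $N$'s are recovered as the $N$-steps at which $\#N-(k-1)\#E$ attains a new maximum; removing the leading $E$ and these $r-1$ steps cuts $\widehat w$ uniquely into consecutive factors $w_1,\dots,w_r$, each a $k$-good path, of total semilength $n$, and hence yields a unique tuple $(T_1,\dots,T_r)$. Mutual inverseness is then formal. I expect the main obstacle to be exactly this recovery step: proving that in an \emph{arbitrary} element of $\svt((n+1)^2,\rho(k,r))$ there are precisely $r-1$ such ``record'' $N$-steps and that they genuinely separate the reading word into $k$-good paths (no factor is split, no record $N$ lands inside a factor), which is the content of the envelope bookkeeping above and requires a little extra care when some of the $T_j$ are empty. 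With that established, the theorem follows by combining the bijection with Proposition~\ref{thm: Raney numbers lemma}.
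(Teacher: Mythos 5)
Your proposal is correct, but it runs along a genuinely different track from the paper's. The paper works directly on tableaux: it inserts a marked column in front of each $T_j$, reindexes, then ``collapses'' entries leftward onto a single prepended column of density $(1,r-1)$, verifying column-standardness by comparing how far first-row versus second-row entries get shifted; the inverse pushes second-row entries rightward and lets violations of column-standardness mark where the interstitial columns must be reinserted. You instead pass to the reading-word model and do the surgery on words: prepend $E$, interleave $r-1$ extra $N$'s, and recover them as the strict records of the excess $\#N-(k-1)\#E$. Two remarks. First, your identification of $\svt((n+1)^2,\rho(k,r))$ with the words lying weakly below $E\,N^{r-1}(E\,N^{k-1})^n$ is exactly the $a_j\equiv 1$ case of Theorem \ref{thm: tableaux vs lattice paths}, which the paper only establishes later in Section \ref{sec: lattice paths}; the ``identical check'' is easy but should be written out (or that theorem cited), including the density-zero cell that occurs when $r=1$ or $k=1$. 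Second, the recovery step you flag as the main obstacle does close cleanly, but the baseline for ``new maximum'' must be the value $1-k$ attained just after the leading $E$, not the value $0$ at the empty prefix: when $r<k$ the excess stays strictly negative, so records measured against $0$ would not exist at all. With that convention the count is automatic: the envelope condition bounds the excess by $r-k$ and the full word ends at exactly $r-k$, the running maximum increases only at $N$-steps and by exactly $1$ each time, so there are exactly $(r-k)-(1-k)=r-1$ record $N$-steps; and since no record occurs strictly inside a segment, each segment keeps relative excess $\le 0$ and returns to $0$, i.e. is a (possibly empty) $k$-good word. What your route buys is that, once the tableau--path dictionary is in place, the bijection is the classical record/first-passage decomposition underlying Equation \ref{eq: Raney recurrence}; what the paper's route buys is a self-contained argument within Section \ref{sec: generalized Catalan numbers} that never leaves the tableau model.
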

\begin{proof}
Take $(T_1,\hdots,T_r)$, where $T_j \in \svt(i_j^2,\rho)$ with $\rho_{1,j}=1,\rho_{2,j}=k-1$ and $i_1 + \hdots + i_r = n$.  Observe that a total of $kn$ integers appear across the $2n$ cells of the $T_j$.  Create a partially-filled Young diagram $D$ of shape $\lambda_D = (n+r)^2$ by adding an empty column $c_j$ in front of each $T_j$ and horizontally concatenating the resulting tableaux in the given order.  Notice that this will result in multiple consecutive empty columns if any of the $T_j$ are empty.  Mark the top cell of column $c_1$ and the bottom cells of columns $c_2,\hdots,c_r$.  This gives $r$ markings in addition to the $kn$ integers of $D$.  Re-index these $kn+r$ items by working through $D$ from left-to-right.  Every time a marking is encountered, assign the marked cell the smallest available element of $[kn+r]$.  When $T_j$ is encountered, simultaneously replace the $k i_j$ integers of $T_j$ with the $k i_j$ smallest available elements of $[kn+r]$, preserving the relative ordering within $T_j$.

This gives a partially-filled set-valued Young tableau $\widetilde{D}$ that is row-standard and column-standard if you look past the empty cells.  Then ``collapse" the entries of $\widetilde{D}$ off the interstitial columns $c_2,\hdots,c_j$ (but not off $c_1$) by shifting all entries leftward until the cells corresponding to $c_1$ have density $\rho_{1,1}=1,\rho_{1,2} = r-1$, the cells corresponding to the $T_j$ have row-constant density $\rho_{1,j}=1,\rho_{2,j}=k-1$, and the cells corresponding to the columns $c_2,\hdots,c_j$ are empty.  Deleting the columns corresponding to $c_2,\hdots,c_j$ then produces a set-valued tableau $T$ of shape $\lambda = (n+1)^2$ and density $\rho(k,r)$.  $T$ is obviously row-standard.  To see that $T$ is also column-standard, notice that first row entries of $\widetilde{D}$ that were originally associated with a particular $T_j$ are shifted leftward by precisely $j-1$ cells as we pass from $\widetilde{D}$ to $T$, whereas second row entries in $\widetilde{D}$ that were originally associated with $T_j$ are shifted leftward by at least $j-1$ cells as we pass from $\widetilde{D}$ to $T$.  The latter observation follows from the fact that $r-1$ integers must eventually appear in the $(2,1)$ cell of $T$, and that there are $j-1 \leq r-1$ marked second-row cells to the left of the entries associated with $T_j$.  As second row entries are shifted at least as far left as first row entries, the column-standardness of $\widetilde{D}$ implies that $T$ is also column-standard.

To show that our map $(T_1,\hdots,T_r) \mapsto T$ is bijective we provide a well-defined inverse.  For $T \in \svt((n+1)^2,\rho(k,r))$, collectively shift all entries in the second row rightward so that the $(2,1)$ cell is empty and all remaining cells in the second row contain precisely $k-1$ entries (there will be an overflow of $r-1$ elements at the end of the second row).  Then proceed through the second-row from left-to-right and identify the smallest integer $c$ that violates column-standardness.  Insert a new, partially-filled column at the position of $c$ whose top cell is empty and whose bottom cell contains $c$.  Then re-allocate the remaining entries of the second row so that $k-1$ entries appear in each cell to the right of $c$, and repeat the above procedure until $r-1$ new columns have been added.  The end result of this procedure is identical to the partially-filled tableau $\widetilde{D}$ from above.  This is because the second-row entries of the interstitial columns $c_2,\hdots,c_r$ are necessarily smaller than all entries in the ``block" corresponding to $T_j$ and hence would violate column-standardness if moved one cell to their right.
\end{proof}

\begin{figure}[ht!]
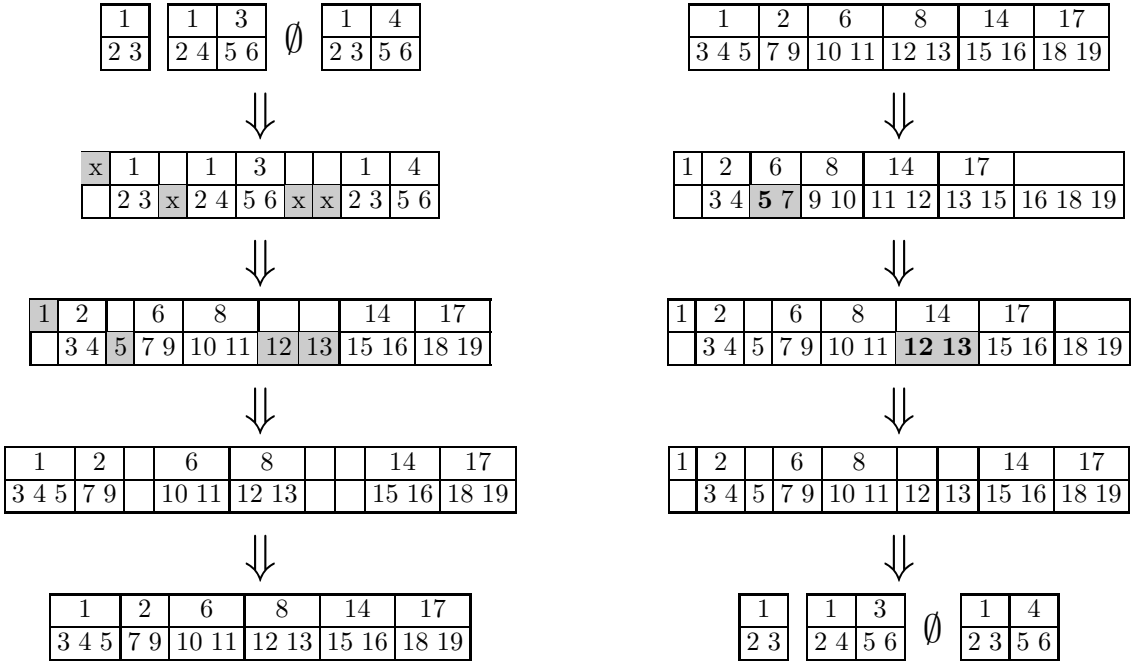

\begin{subfigure}[b]{0.49\textwidth}
\centering
\setlength{\tabcolsep}{2.5pt}
\begin{tabular}{|c|}
\hline
1 \\ \hline
2 \kern-.25pt 3 \\ \hline
\end{tabular}
\hspace{.0in}
\begin{tabular}{|c|c|}
\hline
1 & 3 \\ \hline
2 \kern-.25pt 4 & 5 \kern-.25pt 6 \\ \hline
\end{tabular}
\hspace{.0in}
\raisebox{-3pt}{\scalebox{1.5}{$\emptyset$}}
\hspace{.0in}
\begin{tabular}{|c|c|}
\hline
1 & 4 \\ \hline
2 \kern-.25pt 3 & 5 \kern-.25pt 6 \\ \hline
\end{tabular}

\vspace{.1in}

\scalebox{2}{$\Downarrow$}

\vspace{.05in}

\begin{tabular}{|c|c|c|c|c|c|c|c|c|}
\hline
\cellcolor{gray!40}x & 1 & & 1 & 3 & & & 1 & 4 \\ \hline
& 2 \kern-.25pt 3 & \cellcolor{gray!40}x & 2 \kern-.25pt 4 & 5 \kern-.25pt 6 & \cellcolor{gray!40}x & \cellcolor{gray!40}x & 2 \kern-.25pt 3 & 5 \kern-.25pt 6 \\ \hline
\end{tabular}

\vspace{.1in}

\scalebox{2}{$\Downarrow$}

\vspace{.05in}

\begin{tabular}{|c|c|c|c|c|c|c|c|c|}
\hline
\cellcolor{gray!40}1 & 2 & & 6 & 8 & & & 14 & 17 \\ \hline
& 3 \kern-.25pt 4 & \cellcolor{gray!40}5 & 7 \kern-.25pt 9 & 10 \kern-.25pt 11 & \cellcolor{gray!40}12 & \cellcolor{gray!40}13 & 15 \kern-.25pt 16 & 18 \kern-.25pt 19 \\ \hline
\end{tabular}

\vspace{.1in}

\scalebox{2}{$\Downarrow$}

\vspace{.05in}

\begin{tabular}{|c|c|c|c|c|c|c|c|c|}
\hline
1 & 2 & & 6 & 8 & & & 14 & 17 \\ \hline
3 \kern-.25pt 4 \kern-.25pt 5 & 7 \kern-.25pt 9 & \hspace{6pt} & 10 \kern-.25pt 11 & 12 \kern-.25pt 13 & \hspace{6pt} & \hspace{6pt} & 15 \kern-.25pt 16 & 18 \kern-.25pt 19 \\ \hline
\end{tabular}

\vspace{.1in}

\scalebox{2}{$\Downarrow$}

\vspace{.05in}

\begin{tabular}{|c|c|c|c|c|c|c|}
\hline
1 & 2 & 6 & 8 & 14 & 17 \\ \hline
3 \kern-.25pt 4 \kern-.25pt 5 & 7 \kern-.25pt 9 & 10 \kern-.25pt 11 & 12 \kern-.25pt 13 & 15 \kern-.25pt 16 & 18 \kern-.25pt 19 \\ \hline
\end{tabular}

\end{subfigure}
\begin{subfigure}[b]{0.49\textwidth}
\centering
\setlength{\tabcolsep}{2.5pt}
\begin{tabular}{|c|c|c|c|c|c|c|}
\hline
1 & 2 & 6 & 8 & 14 & 17 \\ \hline
3 \kern-.25pt 4 \kern-.25pt 5 & 7 \kern-.25pt 9 & 10 \kern-.25pt 11 & 12 \kern-.25pt 13 & 15 \kern-.25pt 16 & 18 \kern-.25pt 19 \\ \hline
\end{tabular}

\vspace{.1in}

\scalebox{2}{$\Downarrow$}

\vspace{.05in}

\begin{tabular}{|c|c|c|c|c|c|c|c|}
\hline
1 & 2 & 6 & 8 & 14 & 17 & \\ \hline
& 3 \kern-.25pt 4 & \cellcolor{gray!40}\textbf{5} \kern-.25pt 7 & 9 \kern-.25pt 10 & 11 \kern-.25pt 12 & 13 \kern-.25pt 15 & 16 \kern-.25pt 18 \kern-.25pt 19 \\ \hline
\end{tabular}

\vspace{.1in}

\scalebox{2}{$\Downarrow$}

\vspace{.05in}

\begin{tabular}{|c|c|c|c|c|c|c|c|c|}
\hline
1 & 2 & & 6 & 8 & 14 & 17 & \\ \hline
& 3 \kern-.25pt 4 & 5 & 7 \kern-.25pt 9 & 10 \kern-.25pt 11 & \cellcolor{gray!40}\textbf{12} \kern-.25pt \textbf{13} & 15 \kern-.25pt 16 & 18 \kern-.25pt 19 \\ \hline
\end{tabular}

\vspace{.1in}

\scalebox{2}{$\Downarrow$}

\vspace{.05in}

\begin{tabular}{|c|c|c|c|c|c|c|c|c|c|}
\hline
1 & 2 & & 6 & 8 & & & 14 & 17 \\ \hline
& 3 \kern-.25pt 4 & 5 & 7 \kern-.25pt 9 & 10 \kern-.25pt 11 & 12 & 13 & 15 \kern-.25pt 16 & 18 \kern-.25pt 19 \\ \hline
\end{tabular}

\vspace{.1in}

\scalebox{2}{$\Downarrow$}

\vspace{.05in}

\begin{tabular}{|c|}
\hline
1 \\ \hline
2 \kern-.25pt 3 \\ \hline
\end{tabular}
\hspace{.0in}
\begin{tabular}{|c|c|}
\hline
1 & 3 \\ \hline
2 \kern-.25pt 4 & 5 \kern-.25pt 6 \\ \hline
\end{tabular}
\hspace{.0in}
\raisebox{-3pt}{\scalebox{1.5}{$\emptyset$}}
\hspace{.0in}
\begin{tabular}{|c|c|}
\hline
1 & 4 \\ \hline
2 \kern-.25pt 3 & 5 \kern-.25pt 6 \\ \hline
\end{tabular}
\end{subfigure}

\caption{Transforming an $r$-tuple $(T_1,\hdots,T_r)$ of set-valued tableaux into a single set-valued tableau of density $\rho(k,r)$ via horizontal concatenation, alongside the inverse procedure.}
\label{fig: tableaux concatenation example}
\end{figure}

For an example illustrating both directions of the bijection from Theorem \ref{thm: Raney numbers}, see Figure \ref{fig: tableaux concatenation example}.  Notice that our interpretation of $R_{k,r}(n)$ as the cardinality of $\svt((n+1)^2,\rho(k,r))$ immediately recovers the $k$-Catalan specialization $R_{k,k}(n-1) = C_n^k$ of Heubach, Li and Mansour \cite{HLM} when $r=k$.  Also notice the special meaning of Theorem \ref{thm: Raney numbers} as it applies to the extreme case of $r=1$, as set-valued tableaux of density $\rho(k,1)$ have an empty cell at position $(2,1)$.  In this case, one may construct a bijection from $\svt((n+1)^2,\rho(k,1))$ to $\svt(n^2,\rho(k,k))$ by deleting the first column of $T \in \svt((n+1)^2,\rho(k,1))$ and re-indexing the remaining $nk$ entries of $T$ by $x \mapsto (x-1)$.  This bijection directly corresponds to the Raney number identity $R_{k,1}(n) = R_{k,k}(n-1) = C_n^k$.

\subsection{The $(s,t)$-Tennis Ball Problem}
\label{subsec: tennis ball problem}

The so-called ``tennis ball problem" was introduced by Tymoczko and Henle in their logic textbook \cite{TH}, and was subsequently formalized by Mallows and Shapiro \cite{MS}.  The classic version of the problem begins with $2n$ tennis balls, numbered $1,2,\hdots,2n$, and proceeds through $n$ turns.  For the first turn, one takes the balls numbers $1$ and $2$ and randomly throws one of them out of your window onto their lawn.  During the $i^{th}$ turn, the balls numbered $2i-1$ and $2i$ are added to the $i-1$ balls leftover from previous steps, and one of those $i+1$ balls is thrown onto your lawn.  The problem then asks how many different sets of balls are possible on one's lawn after $n$ steps.  Independent from Mallows and Shapiro \cite{MS}, Grimaldi and Moser \cite{GM} proved that the number of such arrangements was the Catalan number $C_{n+1}$.

Merlini, Sprugnoli, and Verri \cite{MSV} generalized these phenomena to the $(s,t)$-tennis ball problem, whereby $s$ new balls are added and $t$ balls are thrown onto the lawn during each turn.  If we let $\mathcal{B}_{s,t}(n)$ denote the number of arrangements possible after $n$ turns in the generalized problem, Merlini et al. \cite{MSV} showed that $\mathcal{B}_{k,1}(n) = C^k_{n+1}$.  Generating functions for all $\mathcal{B}_{s,t}(n)$ were later developed by de Mier and Noy \cite{MN}, who placed the resulting arrangements in bijection with certain classes of N-E lattice paths (see Section \ref{sec: lattice paths}).

All of this is relevant in that the $\mathcal{B}_{s,t}(n)$ admits a straightforward combinatorial interpretation in terms of Young tableaux, an interpretation that has yet to appear anywhere in the literature.  As seen with the row-constant densities of Theorem \ref{thm: tennis ball problem}, the $\mathcal{B}_{s,t}(n)$ represent a one-parameter generalization of the $k$-Catalan numbers that are distinct from the Raney numbers.

\begin{theorem}
\label{thm: tennis ball problem}
Fix $s,t \geq 1$ such that $s \geq t$.  The solution to the $(s,t)$-tennis ball problem after $n$ turns is $\mathcal{B}_{s,t}(n) = \vert \svt((n+1)^2,\rho) \vert$, where $\rho$ is the row-constant density $\rho_{1,j} = t$, $\rho_{2,j} = s-t$.
\end{theorem}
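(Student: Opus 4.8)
The plan is to avoid describing any insertion-type algorithm and instead reduce both sides of the claimed identity to the \emph{same} family of subsets of an interval, leaving only elementary index bookkeeping.

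First I would determine which configurations are actually reachable in the $(s,t)$-tennis ball problem. Encode a final configuration on the lawn as a set $L = \{\ell_1 < \cdots < \ell_{nt}\} \subseteq [ns]$. I claim $L$ is reachable if and only if $\ell_{jt} \le js$ for every $j \in [n]$ (equivalently $|L \cap [js]| \ge jt$ for all $j$). Necessity is immediate, since the $jt$ balls discarded during turns $1,\dots,j$ all belong to $L$ and are each at most $js$. For sufficiency I would use the greedy strategy that on turn $j$ discards the $t$ smallest elements of $L$ not yet discarded: an easy induction shows that after turn $j$ the discarded set is $\{\ell_1,\dots,\ell_{jt}\}$, and the hypothesis $\ell_{jt}\le js$ guarantees every ball involved has already been put into play, so each move is legal. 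Hence $\mathcal{B}_{s,t}(n)$ equals the number of such $L$.

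Next I would unwind $\svt((n+1)^2,\rho)$ for $\rho_{1,j}=t$, $\rho_{2,j}=s-t$. Since the entries of each row strictly increase from left to right, such a tableau is determined by the pair $(A,B)$ with $A$ the set of $(n+1)t$ first-row entries and $B = [(n+1)s]\setminus A$: row $1$ receives the elements of $A$ in increasing order, $t$ per cell, and row $2$ receives those of $B$, $s-t$ per cell. Writing $A = \{a_1 < a_2 < \cdots\}$ and $B = \{b_1 < b_2 < \cdots\}$, column-standardness says exactly that $a_{jt} < b_{(j-1)(s-t)+1}$ for each $j \in [n+1]$; counting the $a_{jt}$ integers of $[(n+1)s]$ lying at or below $a_{jt}$ and splitting that count into its $A$-part (which is $jt$) and its $B$-part (which must be at most $(j-1)(s-t)$), this is equivalent to the single inequality $a_{jt} \le (j-1)s+t$. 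The case $j=1$ forces $a_i = i$ for $1\le i\le t$, so $A = \{1,\dots,t\}\,\cup\,A'$ (disjointly) with $A' = \{a'_1 < \cdots < a'_{nt}\}\subseteq\{t+1,\dots,(n+1)s\}$; putting $\ell_i := a'_i - t$, the inequalities for $j = 2,\dots,n+1$ become precisely $\ell_{it}\le is$ for $i\in[n]$, and the inequality coming from $j=n+1$ (the case $i=n$) forces $\max L \le ns$, so that $L := \{\ell_1,\dots,\ell_{nt}\} \subseteq [ns]$ automatically. This is exactly the family of reachable configurations from the previous paragraph, and every arrow above is visibly invertible, so $|\svt((n+1)^2,\rho)| = \mathcal{B}_{s,t}(n)$.

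The routine portions are the greedy reachability argument and the arithmetic turning column-standardness into the interval condition. The point needing a little care is the last bookkeeping step: although $A' - t$ a priori lives in $\{1,\dots,ns+(s-t)\}$, the $j=n+1$ inequality pins its largest element at $ns$, so the ambient interval silently shrinks to $[ns]$ and the two subset families coincide literally, not merely in size. As sanity checks I would note the degenerate case $s=t$ (empty second row, one tableau, one reachable configuration) and observe that for $t=1$ the interval condition $\ell_i \le is$ on $L\subseteq[ns]$ recovers the known $\mathcal{B}_{k,1}(n) = C^k_{n+1}$ through the $k$-good-path bijection already discussed.
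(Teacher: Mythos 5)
Your proposal is correct, and at its core it encodes the same correspondence as the paper's proof: lawn balls become first-row entries (shifted by $t$, with padding cells at $(1,1)$ and $(2,n+1)$) and in-hand balls become second-row entries. The difference is in how the two sides are matched. The paper defines the map directly on arrangements and verifies column-standardness via the observation that the first $i$ turns discard $ti$ balls labelled at most $si$; it asserts bijectivity without spelling out the inverse, which implicitly relies on knowing exactly which lawn-sets are reachable. You instead reduce both sides to the same explicit family of subsets: you prove the reachability criterion $\ell_{jt}\le js$ (necessity from the labels-in-play bound, sufficiency by the greedy discard strategy with an induction on the discarded set), and you translate column-standardness of the row-constant tableaux into $a_{jt}\le (j-1)s+t$ via the order-statistic count $a_{jt}=jt+\lvert B\cap[a_{jt}]\rvert$, then line up the two families after the shift by $t$ (correctly noting that the $j=n+1$ inequality forces $L\subseteq[ns]$). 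What your route buys is a fully self-contained proof of surjectivity/invertibility — precisely the step the paper's write-up leaves implicit — at the cost of a somewhat longer bookkeeping argument; the paper's version is shorter and more pictorial but leans on the known characterization of reachable configurations. Your handling of the degenerate case $s=t$ (empty second-row cells, where the inequality form remains valid even though the comparison with $b_{(j-1)(s-t)+1}$ is vacuous) is a sensible extra check.
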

\begin{proof}
We define a bijection from the set of arrangements after $n$ turns to $\svt((n+1)^2,\rho)$.  Place the $nt$ numbers corresponding to balls on the lawn in increasing order across the first row of the Young diagram of shape $\lambda = (n+1)^2$, beginning with the cell at $(1,2)$ and ensuring each cell receives $t$ integers.  Then place the remaining $n(s-t)$ integers in increasing order across the first $n$ second row cells of that same Young diagram, ensuring each cell receives $s-t$ integers.  After reindexing entries by $x \mapsto x+t$, place the integers $1,\hdots,t$ in the cell at position $(1,1)$, and place the integers $ns-s+t+1,\hdots,ns$ at position $(2,n+1)$.  The resulting set-valued tableau is row-standard by construction.  To see that it is column-standard, notice that the first $i$ turns of the procedure collectively involve throwing $ti$ balls with labels at most equal to $si$.  This means that, before reindexing, the largest entry at position $(1,i+1)$ is at most $si$.  It also implies that, before reindexing, the largest entry at position $(2,i)$ is at least $si$.  It follows that the smallest entry at $(2,i+1)$, before reindexing, is at least $si+1$.
\end{proof}

Comparing Theorem \ref{thm: tennis ball problem} with the results of Subsection \ref{subsec: Raney numbers}, in the case of $t=1$ we directly recover the result of Merlini, Sprungnoli, and Verri \cite{MSV} that $\mathcal{B}_{s,t}(n) = C_{n+1}^s$.  Also note that the result of Theorem \ref{thm: tennis ball problem} may be further generalized to the ``non-constant" tennis ball problem of de Mier and Noy \cite{MN}.  If $\vec{s} = \lbrace s_i \rbrace$ and $\vec{t} = \lbrace t_i \rbrace$ are sequences of positive integers such that $t_i < s_i$ for all $i$, the $(\vec{s},\vec{t})$-tennis ball problem is the generalization of the tennis ball problem wherein $s_i$ new balls are added and $t_i$ balls are thrown out the window during the $i^{th}$ turn.  Equivalent reasoning to Theorem \ref{thm: tennis ball problem}, yields the following combinatorial interpretation of the solution $\mathcal{B}_{\vec{s},\vec{t}}(n)$ to this fully-generalized problem.

\begin{theorem}
\label{thm: tennis ball problem 2}
Let $\vec{s} = \lbrace s_i \rbrace$ and $\vec{t} = \lbrace t_i \rbrace$ be sequences of positive integers such that $t_i < s_i$ for all $i$.  Then the solution to the $(\vec{s},\vec{t})$-tennis ball problem after $n$ turns is $\mathcal{B}_{\vec{s},\vec{t}}(n) = \vert \svt((n+1)^2,\rho) \vert$, where $\rho$ is shown below.

\begin{center}
\setlength{\tabcolsep}{4pt}
\begin{tabular}{|c|c|c|c|c|}
\hline
$(1)$ & $(t_1)$ & $\hdots$ & $(t_{n-1})$ & $(t_n)$ \\ \hline
$(s_1 - t_1)$ & $(s_2 - t_2)$ & $\hdots$ & $(s_n - t_n)$ & $(1)$ \\ \hline
\end{tabular}
\end{center}
\end{theorem}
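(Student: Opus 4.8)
The plan is to mirror the proof of Theorem~\ref{thm: tennis ball problem}, with the constants $s,t$ replaced by the sequences $\vec{s},\vec{t}$ and with the two ``padding'' cells now carrying a single integer each, as dictated by the density $\rho$. Throughout set $S_i = s_1 + \cdots + s_i$ and $T_i = t_1 + \cdots + t_i$, so that after $n$ turns exactly $S_n$ balls have entered and $T_n$ lie on the lawn. The first, and really the only substantive, step is to isolate the combinatorial content of an arrangement: a subset $A \subseteq \{1,\dots,S_n\}$ with $|A| = T_n$ occurs as the lawn after $n$ turns if and only if $|A \cap \{1,\dots,S_i\}| \ge T_i$ for every $i \in \{1,\dots,n\}$ --- equivalently, the $T_i$-th smallest element of $A$ is at most $S_i$. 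One direction is immediate, since the $T_i$ balls thrown by the end of turn $i$ all lie in $\{1,\dots,S_i\}$ and never leave the lawn; for the converse, throw on turn $i$ the $t_i$ smallest elements of $A$ that have already entered and have not yet been thrown, the inequality at index $i$ guaranteeing that $t_i$ such balls exist.

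With that in hand I would define $\Phi$ from arrangements to $\svt((n+1)^2,\rho)$ exactly as in Theorem~\ref{thm: tennis ball problem}: reindex $\{1,\dots,S_n\}$ by $x\mapsto x+1$, put the integer $1$ in cell $(1,1)$ and the integer $S_n+2$ in cell $(2,n+1)$, write the reindexed elements of $A$ in increasing order across cells $(1,2),\dots,(1,n+1)$ with $t_{j-1}$ of them in cell $(1,j)$, and write the reindexed elements of $A^{c} := \{1,\dots,S_n\}\setminus A$ in increasing order across cells $(2,1),\dots,(2,n)$ with $s_j-t_j$ of them in cell $(2,j)$. The assumption $t_i<s_i$ makes every cell nonempty, so the density is exactly $\rho$, and row-standardness is automatic. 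For column-standardness, columns $1$ and $n+1$ are controlled by the two singleton cells (using $\max(A)\le S_n$ for the last column), while for $2\le j\le n$ the largest entry of cell $(1,j)$ is the image of the $T_{j-1}$-th smallest element $a$ of $A$ and the smallest entry of cell $(2,j)$ is the image of the $(S_{j-1}-T_{j-1}+1)$-th smallest element of $A^{c}$; the realizability condition gives $a\le S_{j-1}$, hence $|A^{c}\cap\{1,\dots,a\}| = a - T_{j-1} \le S_{j-1}-T_{j-1}$, so the $(S_{j-1}-T_{j-1}+1)$-th smallest element of $A^{c}$ strictly exceeds $a$, as required.

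The inverse of $\Phi$ deletes the singletons in cells $(1,1)$ and $(2,n+1)$, reindexes the remaining $S_n$ entries down by one, and reads off $A$ from the first row; running the above inequalities backwards --- once more using $t_i<s_i$ so that every relevant cell is nonempty and ``smallest entry of cell $(2,j)$'' makes sense --- recovers $|A\cap\{1,\dots,S_i\}|\ge T_i$ for all $i$, so that $A$ is an honest arrangement and $\Phi$ is a bijection. I do not expect a genuine obstacle here: the heart of the matter is the characterization of arrangements in the first paragraph and the observation that it is precisely equivalent to column-standardness of $\Phi(A)$; everything else is the partial-sum-and-order-statistic bookkeeping already present in Theorem~\ref{thm: tennis ball problem}. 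Specializing $\vec{s},\vec{t}$ to constants yields a set of tableaux equinumerous with that of Theorem~\ref{thm: tennis ball problem} (the padding cells here being singletons rather than of densities $t$ and $s-t$), and the construction lines up with the lattice-path enumeration of $\mathcal{B}_{\vec{s},\vec{t}}(n)$ due to de Mier and Noy~\cite{MN}.
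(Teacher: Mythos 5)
Your proposal is correct and follows essentially the same route as the paper, which proves this result by simply transplanting the argument of Theorem \ref{thm: tennis ball problem} (the forward map placing lawn-ball labels in the first row and the rest in the second row, with the partial-sum inequalities $\vert A \cap \lbrace 1,\hdots,S_i\rbrace\vert \geq T_i$ giving column-standardness). If anything, you are more thorough than the paper: your explicit characterization of achievable arrangements via the greedy throwing strategy, and the verification of the inverse map, supply the surjectivity/bijectivity details that the paper leaves implicit.
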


\section{Enumeration of Two-Row Set-Valued Tableaux}
\label{sec: enumeration}

Although an enumeration of $\svt(\lambda,\rho)$ for general $\lambda$ and $\rho$ isn't currently tractable, the two-row case of $\lambda = (n_1,n_2)$ is sufficiently simple that methodologies may be developed for arbitrary $\rho$.  In this section, we present a technique for such enumerations that we refer to as ``density shifting".  This procedure sets up a bijection between $\svt(\lambda,\rho)$ and a collection of sets $\svt(\lambda',\rho'_i)$, where $\lambda'=(n_1 - 1, n_2 - 1)$ and the varying densities $\rho'_i$ are determined by $\rho$.

To define our procedure of \textbf{density shifting}, fix $\lambda = (n_1,n_2)$ and a density $\rho$ with $\rho_{1,j} = a_j$, $\rho_{2,j} = b_j$.  We focus on the first two columns of arbitrary $T \in \svt(\lambda,\rho)$, and consider the relationship of the integers $\beta_1 < \hdots < \beta_{b_1}$ at position $(2,1)$ to the integers $\alpha_1 < \hdots < \alpha_{a_2}$ at position $(1,2)$.  In particular, we identify the smallest integer $\beta_m$ such that $\beta_m > \alpha_{a_2}$.  The integers $\beta_m,\beta_{i+1},\hdots,\beta_{b_1}$ may then be moved to the cell at $(2,2)$ without violating column-standardness, where they are necessarily the $b_1 - m + 1$ smallest integers at $(2,2)$.  The remaining integers $\beta_1,\hdots,\beta_{i-1}$ at position $(2,1)$ are then moved to the cell at $(1,2)$, where they are smaller than $\alpha_{a_2}$ but their relationship to $\alpha_1,\hdots,\alpha_{a_2 - 1}$ depends upon the choice of $T$.  With the cell at $(2,1)$ empty, the entire first row of the tableau is deleted and the remaining entries are re-indexed according to $x \mapsto x - a_1$.  This produces a tableau $d(T) \in \svt(\lambda',\rho')$ for $\lambda'=(n_1-1,n_2-1)$ and some $\rho'$ with $\rho'_{i,j} = \rho_{i+1,j+1}$ for $j >1$ and first row densities $\rho'_{1,j}$ determined by $T$.  We refer to this new tableau $d(T)$ as the \textbf{density shift} of $T$.  See Figure \ref{fig: density shifting example} for an example.

\begin{figure}[ht!]
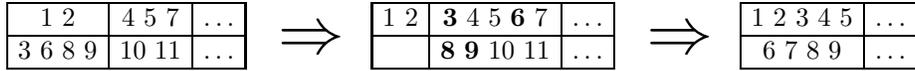

\centering
\setlength{\tabcolsep}{4pt}
\begin{tabular}{|c|c|c|}
\hline
1 2 & 4 5 7 & $\hdots$ \\ \hline
3 6 8 9 & 10 11 & $\hdots$ \\ \hline
\end{tabular}
\hspace{.075in}
\raisebox{-4pt}{\scalebox{2.5}{$\Rightarrow$}}
\hspace{.05in}
\begin{tabular}{|c|c|c|}
\hline
1 2 & \textbf{3} 4 5 \textbf{6} 7 & $\hdots$ \\ \hline
 & \textbf{8} \textbf{9} 10 11 & $\hdots$ \\ \hline
\end{tabular}
\hspace{.075in}
\raisebox{-4pt}{\scalebox{2.5}{$\Rightarrow$}}
\hspace{.05in}
\begin{tabular}{|c|c|}
\hline
1 2 3 4 5 & $\hdots$ \\ \hline
6 7 8 9 & $\hdots$ \\ \hline
\end{tabular}
\caption{A two-row set-valued tableau $T$ and its density shift $d(T)$.}
\label{fig: density shifting example}
\end{figure}

The map $T \mapsto d(T)$ is well-defined into $\bigcup_i \svt(\lambda',\rho'_i)$, assuming that one appropriately determines the collection of shifted densities $\rho'_i$.  However, the map is far from injective, as $d(T)$ does not remember which of its (non-maximal) entries at position $(1,1)$ were shifted to that position.  Relating $\vert \svt(\lambda,\rho) \vert$ to the $\vert \svt(\lambda',\rho_i') \vert$ requires that we account for all possible positioning of shifted entries at $(2,1)$.

In the statement of Theorem \ref{thm: density shifting} and all that follows, we use a Young diagram of shape $\lambda$ labelled with cell densities $\rho_{i,j}$ from $\rho$ to denote the cardinality $\svt(\lambda,\rho)$.

\begin{theorem}
\label{thm: density shifting}
For any two-row shape $\lambda = (n_1,n_2)$ and density $\rho$ as shown,

$$
\vert \svt(\lambda,\rho) \vert \ = \ \setlength{\tabcolsep}{4pt}
\begin{tabular}{|>{$}c<{$}|>{$}c<{$}|>{$}c<{$}|>{$}c<{$}|}
\hline
(a_1) & (a_2) & (a_3) & \hdots \\ \hline
(b_1) & (b_2) & (b_3) & \hdots \\ \hline
\end{tabular}
\ = \ \sum_{i=0}^{b_1} \binom{a_2 + i - 1}{i}
\begin{tabular}{|>{$}c<{$}|>{$}c<{$}|>{$}c<{$}|}
\hline
(a_2) & (a_3) & \hdots \\ \hline
(b_1 + b_2 - i) & (b_3) & \hdots \\ \hline
\end{tabular}
$$
where the sets $\svt(\lambda',\rho'_i)$ inside the summation on the right are all of shape $\lambda = (n_1-1,n_2-1)$.
\end{theorem}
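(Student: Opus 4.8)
The plan is to partition $\svt(\lambda,\rho)$ by the statistic that drives the density shift, count the blocks, and sum.  For $T\in\svt(\lambda,\rho)$ write $\beta_1<\dots<\beta_{b_1}$ for the entries at position $(2,1)$ and $\alpha_1<\dots<\alpha_{a_2}$ for the entries at position $(1,2)$, and let $i(T)$ be the number of $\beta_k$ with $\beta_k<\alpha_{a_2}$.  This gives $\svt(\lambda,\rho)=\bigsqcup_{i=0}^{b_1}\mathcal{T}_i$ with $\mathcal{T}_i=\{T : i(T)=i\}$, so it suffices to prove $\lvert\mathcal{T}_i\rvert=\binom{a_2+i-1}{i}\lvert\svt(\lambda',\rho'_i)\rvert$ for each $i$.  (I assume $n_2\geq 2$, as is implicit in the statement once $b_2$ is referenced; the case $n_2=1$ is immediate.)

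First I would check that the density shift restricts to a well-defined map $d\colon\mathcal{T}_i\to\svt(\lambda',\widehat\rho_i)$, where $\widehat\rho_i$ is obtained from $\rho'_i$ by changing the $(1,1)$-density from $a_2$ to $a_2+i$.  Concretely, $d(T)$ slides $\beta_1,\dots,\beta_i$ into cell $(1,2)$, slides $\beta_{i+1},\dots,\beta_{b_1}$ into cell $(2,2)$, deletes the first column (whose lower cell is now empty), and re-indexes by $x\mapsto x-a_1$.  Row-standardness is transparent, and for column-standardness the only new adjacencies lie between the old first and second columns; each of these follows from the chain $\text{[entries at }(1,1)\text{]}<\beta_1\leq\dots\leq\beta_{b_1}<\text{[entries at old }(2,2)\text{]}$ together with $\beta_i<\alpha_{a_2}<\beta_{i+1}$ and $\alpha_{a_2}<\text{[entries at old }(1,3)\text{]}$.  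That cell $(1,1)$ of $T$ necessarily holds $\{1,\dots,a_1\}$ — the usual path argument, since every other cell is reached from $(1,1)$ by cover steps along which entries strictly increase — is what legitimizes the re-indexing.

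The crux is to show $d\colon\mathcal{T}_i\to\svt(\lambda',\widehat\rho_i)$ is surjective with every fibre of size exactly $\binom{a_2+i-1}{i}$.  Given $U\in\svt(\lambda',\widehat\rho_i)$ I would describe $d^{-1}(U)$ directly.  Choose any $i$ of the $a_2+i-1$ non-maximal entries of the $(1,1)$-cell of $U$, and build a tableau $T$ of shape $\lambda$ as follows: pull these $i$ entries, together with the $b_1-i$ smallest entries of the $(2,1)$-cell of $U$, out into a brand-new leftmost column so that they become cell $(2,1)$ of $T$; shift the depleted columns of $U$ one step to the right; fill the new cell $(1,1)$ with $\{1,\dots,a_1\}$; and re-index everything inherited from $U$ by $x\mapsto x+a_1$.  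One then checks that $T$ is a genuine standard set-valued tableau (only the first two columns need scrutiny, and the required inequalities follow from column-standardness of $U$, which forces every $(1,1)$-entry of $U$ below every $(2,1)$-entry of $U$, together with the placement of $\{1,\dots,a_1\}$ in cell $(1,1)$); that $i(T)=i$, because the $i$ chosen entries fall below the maximum of the new $(1,2)$-cell while the $b_1-i$ entries drawn from $(2,1)$ of $U$ lie above it; and that $d(T)=U$.  Conversely, the ``small'' entries of any $T'\in\mathcal{T}_i$ with $d(T')=U$, transported back to $U$, recover one of these $i$-element subsets, so the $\binom{a_2+i-1}{i}$ subsets enumerate the fibre exactly.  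Hence $\lvert\mathcal{T}_i\rvert=\binom{a_2+i-1}{i}\lvert\svt(\lambda',\widehat\rho_i)\rvert$.

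It remains to note $\lvert\svt(\lambda',\widehat\rho_i)\rvert=\lvert\svt(\lambda',\rho'_i)\rvert$: by the same path argument, cell $(1,1)$ of a standard set-valued tableau always contains precisely the smallest entries, so changing its density (here from $a_2+i$ down to $a_2$) while fixing all other densities leaves the cardinality unchanged.  Summing over $0\leq i\leq b_1$ then yields the theorem.  I expect the fibre computation of the third paragraph to be the main obstacle: one must confirm that every one of the $\binom{a_2+i-1}{i}$ ways of declaring ``which entries were shifted'' really produces a column-standard tableau after the entries are pulled back and re-indexed, and that nothing outside this explicit family maps to $U$.
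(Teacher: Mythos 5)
Your proposal is correct and follows essentially the same route as the paper: partition by the number $i$ of $(2,1)$-entries below the maximum of cell $(1,2)$, show the density-shift map on each block is surjective with fibres of size $\binom{a_2+i-1}{i}$ by choosing which $i$ non-maximal entries of the new $(1,1)$-cell were shifted, and finish by noting the $(1,1)$-density can be renormalized from $a_2+i$ to $a_2$ without changing the count. The paper's proof packages the fibre count as a family of explicit inverse maps $d_i^{-1}$ indexed by the $i$-subsets $\vec{u}$, but this is the same argument (and your index range $0\leq i\leq b_1$ is in fact stated more carefully than the paper's).
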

\begin{proof}
For fixed $\lambda = (n_1,n_2)$ and $\rho$ as shown, partition $\svt(\lambda,\rho)$ into subsets $S_1,\hdots,S_{b_1}$ where $S_i = \lbrace T \in \svt(\lambda,\rho) \ \vert \ \text{precisely $i$ entries at $(2,1)$ smaller than largest entry at $(1,2)$} \rbrace$.  When restricted to a specific $S_i$, $T \mapsto d(T)$ defines a function $d_i: \svt(\lambda,\rho) \rightarrow \svt(\lambda',\rho')$ with $\lambda'=(n_1-1,n_2-1)$ and $\rho'_{1,1} = a_2 - i$, $\rho'_{2,1}=b_2 + b_1 - i$.  For any $0 \leq i \leq b_1$, we claim $d_i$ is onto and is exactly $m$-to-$1$, where $m = \binom{a_2 + i - 1}{i}$.

So consider $\svt(\lambda',\rho')$, and notice that the upper-leftmost cell of any $T' \in \svt(\lambda',\rho')$ is filled with the integers $\lbrace 1,2,\hdots,a_2+i \rbrace$.  For any choice $\vec{u}$ of $i$ integers from $[a_2 + i - 1]$, define a map $d_i^{-1}: \svt(\lambda',\rho') \rightarrow \svt(\lambda,\rho)$ as follows:

\begin{enumerate}
\item For any $T' \in \svt(\lambda',\rho')$, remove the $i$ integers at position $(1,1)$ corresponding to $\vec{u}$ as well as the $b_1 - i$ smallest integers at position $(2,1)$.
\item Append a new column to the left of the tableau and populate the bottom cell of that new column with the $b_1$ integers removed during Step \#1.
\item Reindex all entries in the resulting (partially-filled) tableau by $x \mapsto x + a_1$ and add the set $[a_1]$ to the top cell in the new leftmost column, resulting in $d_i^{-1}(T) \in \svt(\lambda,\rho)$.
\end{enumerate}

The map $d_i^{-1}$ has been defined so that $d \circ d_i^{-1}(T') = T'$ for every $T' \in \svt(\lambda',\rho')$.  If we once again let $\beta_1 < \hdots < \beta_{b_1}$ denote the integers at $(2,1)$ of some $T \in S_i$, we have $d_i^{-1} \circ d_i (T) = T$ for precisely those $T$ where $\beta_1,\hdots,\beta_i$ correspond to $\vec{u}$ in $d_i(T)$.  If we let $S_i \vert_{\vec{u}}$ denote the subset of $S_i$ with this restriction upon the $\beta_1,\hdots,\beta_i$, it follows that $S_i \vert_{\vec{u}}$ is in bijection with $\svt(\lambda',\rho')$.  Ranging over all choices of $\vec{u}$ then allows us to conclude that $\vert S_i \vert = \binom{a_2 + i - 1}{i} \vert \svt(\lambda',\rho') \vert$.

To obtain the summation from the theorem, note that changing the density $\rho'_{1,1}$ at position $(1,1)$ has no effect on the size of the sets $\svt(\lambda,\rho)$.  We are then allowed to assume that $\rho'_{1,1} = a_2 = \rho_{1,2}$ is unchanged as we pass from $\rho$ to $\rho'$.  As the $S_i$ partition $\svt(\lambda,\rho)$, varying $1 \leq i \leq b_1$ then yields the required summation.
\end{proof}

Now consider a pair of $n$-tuples of non-negative integers $\vec{x} = (x_1,\hdots,x_n)$ and $\vec{y} = (y_1,\hdots,y_n)$.  One may define a dominance ordering on these tuples whereby $\vec{x} \preceq \vec{y}$ if $x_1 + \hdots + x_i \leq y_1 + \hdots y_i$ for every $1 \leq i \leq n$.  Using this notation, Theorem \ref{thm: density shifting} may be repeatedly applied to derive the following.

\begin{theorem}
\label{thm: repeated density shifting}
For any two-row shape $\lambda = (n_1,n_2)$ and density $\rho$ as shown,
$$
\vert \svt(\lambda,\rho) \vert \ = \ \setlength{\tabcolsep}{4pt}
\begin{tabular}{|>{$}c<{$}|>{$}c<{$}|>{$}c<{$}|>{$}c<{$}|}
\hline
(a_1) & (a_2) & (a_3) & \hdots \\ \hline
(b_1) & (b_2) & (b_3) & \hdots \\ \hline
\end{tabular}
\ = \
\sum_{(i_1,\hdots,i_{n_1-1}) \preceq (b_1,\hdots,b_{n_1-1})}
\prod_{j=1}^{n_1-1}\binom{a_{j+1} + i_j - 1}{i_j}
$$
\end{theorem}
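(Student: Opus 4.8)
The plan is to derive Theorem \ref{thm: repeated density shifting} by iterating Theorem \ref{thm: density shifting}, stripping off one column at each stage; I would organize this as an induction on $n_2$, the length of the second row, which drops by one with every application of density shifting, with base case $n_2 \le 1$.

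The inductive step is the heart of the argument. Assume $n_2 \ge 2$ and invoke Theorem \ref{thm: density shifting} once to write
$$\vert \svt(\lambda,\rho)\vert \ = \ \sum_{i_1=0}^{b_1} \binom{a_2 + i_1 - 1}{i_1} \, \vert \svt(\lambda^{(1)},\rho^{(1)}_{i_1}) \vert ,$$
where $\lambda^{(1)} = (n_1-1,n_2-1)$ and $\rho^{(1)}_{i_1}$ has first-row densities $(a_2,a_3,\ldots,a_{n_1})$ and second-row densities $(b_1+b_2-i_1,\,b_3,\,b_4,\ldots,b_{n_1})$; as in the proof of Theorem \ref{thm: density shifting} we may normalize the $(1,1)$-density of $\rho^{(1)}_{i_1}$ to $a_2$, since it does not affect the count. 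Applying the inductive hypothesis to each $\svt(\lambda^{(1)},\rho^{(1)}_{i_1})$ introduces summation indices $i_2,\ldots,i_{n_1-1}$ (after relabeling), binomial weights that assemble together with $\binom{a_2+i_1-1}{i_1}$ into $\prod_{j=1}^{n_1-1}\binom{a_{j+1}+i_j-1}{i_j}$, and a dominance constraint on $(i_2,\ldots,i_{n_1-1})$ against $(b_1+b_2-i_1,\,b_3,\ldots,b_{n_1-1})$. The key computation is that this last constraint says $i_2+\cdots+i_\ell \le (b_1+\cdots+b_\ell)-i_1$ for $2 \le \ell \le n_1-1$, which rearranges to $i_1+\cdots+i_\ell \le b_1+\cdots+b_\ell$; combined with the outer range $0 \le i_1 \le b_1$ this is exactly $(i_1,\ldots,i_{n_1-1}) \preceq (b_1,\ldots,b_{n_1-1})$, yielding the claimed formula.

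For the base case, when $n_2 \le 1$ one counts $\svt(\lambda,\rho)$ directly: the $(1,1)$-cell is forced to contain $\{1,\ldots,a_1\}$, after which the only freedom is the choice of the $b_1$ entries of the $(2,1)$-cell among the $a_2+\cdots+a_{n_1}+b_1$ entries lying to its right and below, everything else being pinned down by row-standardness; this gives $\binom{a_2+\cdots+a_{n_1}+b_1}{b_1}$ (and simply $1$ when $n_2=0$). On the other side, setting $b_j = 0$ for $j > n_2$ collapses the dominance condition so that the right-hand sum becomes $\sum_{i_1+\cdots+i_{n_1-1}\le b_1}\prod_j \binom{a_{j+1}+i_j-1}{i_j}$, and grouping by $\sum_j i_j$ together with the hockey-stick identity (and the expansion of $(1-x)^{-(a_2+\cdots+a_{n_1})}$) recovers the same binomial coefficient. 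For rectangular $\lambda = n^2$, the case relevant to Section \ref{sec: generalized Catalan numbers}, the base case is merely the observation $\vert\svt((1,1),\rho')\vert = 1$.

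I expect the only real obstacle to be bookkeeping: keeping the index shifts straight so that the ``partial sum minus $i_1$'' bound coming out of the inductive hypothesis telescopes cleanly into the global dominance order, and checking that no application of Theorem \ref{thm: density shifting} is attempted once the shape has degenerated. No idea beyond Theorem \ref{thm: density shifting} should be needed.
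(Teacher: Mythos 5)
Your proof is correct and takes essentially the same approach as the paper: the paper also just iterates Theorem \ref{thm: density shifting}, writing the answer as nested sums whose upper limits amount to $i_1+\cdots+i_\ell \le b_1+\cdots+b_\ell$, i.e.\ the dominance condition. Your version merely packages this as an induction on $n_2$ and checks the degenerate base case $n_2\le 1$ explicitly (via the hockey-stick computation), a point the paper leaves implicit.
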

\begin{proof}
Repeated application of Theorem \ref{thm: density shifting} immediately yields
$$
\setlength{\tabcolsep}{4pt}
\begin{tabular}{|>{$}c<{$}|>{$}c<{$}|>{$}c<{$}|>{$}c<{$}|} \hline
(a_1) & (a_2) & (a_3) & \hdots \\ \hline
(b_1) & (b_2) & (b_3) & \hdots \\ \hline
\end{tabular}
\ = \
\sum_{i_1=0}^{b_1} \binom{a_2 + i_1 - 1}{i_1}
\sum_{i_2=0}^{b_1+b_2-i_1} \binom{a_3 + i_2 - 1}{i_2}
\sum_{i_3=0}^{b_1+b_2+b_3-i_1-i_2} \binom{a_4 + i_3 - 1}{i_3} \hdots
$$
If we assume that $i_1,i_2,\hdots$ must be positive integers, the later summations on the right side are equivalent to $i_1 + i_2 \leq b_1 + b_2$, $i_1 + i_2 + i_3 \leq b_1 + b_2 + b_3$, etc.
\end{proof}

Observe that the equation of Theorem \ref{thm: repeated density shifting} involves all cell densities apart from $a_1$ and $b_n$, aligning with our intuition that changing $a_1$ or $b_n$ does not effect $\vert \svt(\lambda,\rho) \vert$.  Also note that applying Theorems \ref{thm: density shifting} and \ref{thm: repeated density shifting} to non-rectangular shapes $\lambda=(n_1,n_2)$ merely requires that we set $b_j = 0$ for every $j > n_2$. 

\begin{example}
For $\lambda = (n,n)$ and $a_j = 1$, $b_j = k-1$ for all $j$, the product of Theorem \ref{thm: repeated density shifting} becomes
$$\prod_{j=1}^{n_1-1}\binom{a_{j+1} + i_j - 1}{i_j} \ = \prod_{j=1}^{n_1-1} \binom{i_j}{i_j} \ = \ 1$$
Thus $\vert \svt(\lambda,\rho) \vert$ is the number of $(n-1)$-tuples of non-negative integers $(i_1,\hdots,i_{n-1}) \preceq (k-1,\hdots,k-1)$.  If we let $y = kn - i_1 - \hdots i_{n-1}$, these tuples may be placed in bijection with the set $\mathcal{D}_n^k$ of $k$-good paths by $(i_1,\hdots,i_{n-1}) \mapsto E N^{i_1} E N^{i_2} \hdots E N^{i_{n-1}} E N^y$.  Hence $\vert \svt(\lambda,\rho) \vert = C^k_n$, as expected from Section \ref{sec: generalized Catalan numbers}.
\end{example}

\begin{example}
More generally, for $\lambda= (n,n)$ and any density with $a_j = 1$ for all $j$, Theorem \ref{thm: repeated density shifting} shows that $\vert \svt(\lambda,\rho) \vert$ equals the number of $(n-1)$-tuples of non-negative integers $(i_1,\hdots,i_{n-1}) \preceq (b_1,\hdots,b_{n-1})$.  These tuples may be placed in bijection with the set of N-E lattice paths from $(0,0)$ to $(n,b_1 + \hdots + b_n)$ that lie weakly below the path $P = E N^{b_1} E N^{b_2} \hdots E N^{b_n}$ via the same map as the previous example.  See Section \ref{sec: lattice paths} for a further generalization of this result.
\end{example}

\section{Set-Valued Tableaux \& Two-Dimensional Lattice Paths}
\label{sec: lattice paths}

We close by drawing a bijection between arbitrary $\svt(\lambda,\rho)$ with $\lambda = n^2$ and various classes of two-dimensional lattice paths, generalizing the phenomenon exemplified in Figure \ref{fig: k-Catalan interpretations}.  This requires a consideration of all integer lattice paths from $(0,0)$ to $(a,b)$ that use only East $E=(1,0)$ and North $N=(0,1)$ steps, which we refer to as N-E lattice paths of shape $(a,b)$.  To avoid ambiguities in the definition of our lattice paths, for the rest of this section we restrict our attention to densities $\rho$ that lack cells of density zero.  It is straightforward to extend all of the results below to $\rho$ with zero density cells, so long as there does not exist $j$ where $\rho_{2,j}=\rho_{1,j+1}=0$.

So fix $\lambda = n^2$ and consider the density $\rho$ where $\rho_{1,j} = a_j,\rho_{2,j}=b_j$.  If $\sum_j a_j = a$ and $\sum_j b_j = b$, there exists a map $\psi_\rho: \svt(\lambda,\rho) \rightarrow \mathcal{P}$ into the set $\mathcal{P}$
of lattice paths of shape $(a,b)$ such that first row entries of $T \in \svt(\lambda,\rho)$ correspond to East steps in $\psi_\rho(T)$ and second row entries of $T$ correspond to North steps in $\psi_\rho(T)$.  The map $\psi_\rho$ is always an injection, but its image is dependent upon the choice of $\rho$. 

To characterize $\im(\psi_\rho)$, we introduce a partial order on $\mathcal{P}$.  For $P_1,P_2 \in \mathcal{P}$, define $P_1 \geq P_2$ if $P_1$ lies weakly above $P_2$ across $0 \leq x \leq a$.\footnote{This poset is isomorphic to Young's lattice via the map that takes a path to the Young diagram lying above its conjugate.}  Our map $\psi_\rho$ respects this partial order in the following sense.

\begin{lemma}
\label{thm: tableaux vs lattice paths lemma} 
For fixed $\lambda = n^2$ and $\rho$, take $P_1,P_2 \in \mathcal{P}$ such that $P_1 \geq P_2$.  If $P_1 \in \im(\psi_\rho)$, then $P_2 \in \im(\psi_\rho)$.
\end{lemma}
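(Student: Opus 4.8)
The plan is to describe, for a given $\rho$, exactly which paths $P$ lie in $\im(\psi_\rho)$ and then show this description is closed downward in the poset. Recall that $\psi_\rho$ reads $T \in \svt(n^2,\rho)$ and records each first-row entry as an $E$ step and each second-row entry as an $N$ step, in increasing order of the entry value. So a path $P = s_1 s_2 \cdots s_{a+b}$ (each $s_k \in \{E,N\}$) lies in $\im(\psi_\rho)$ precisely when one can partition the $E$-steps of $P$ consecutively into blocks of sizes $a_1, a_2, \ldots, a_n$ and the $N$-steps of $P$ consecutively into blocks of sizes $b_1, b_2, \ldots, b_n$, in such a way that the resulting filling of the Young diagram (the $\ell$-th group of $E$-steps going into cell $(1,\ell)$, the $\ell$-th group of $N$-steps into cell $(2,\ell)$) is column-standard. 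Since row-standardness is automatic and the $i$-th column-standardness condition only compares cell $(1,i+1)$ with cell $(2,i)$, the filling is a valid tableau iff for every $i$ the largest label in the $i$-th $E$-block is smaller than the smallest label in the $i$-th $N$-block.

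First I would translate that condition into a purely combinatorial statement about $P$. Reading $P$ left to right and tracking $(x,y) = (\#E\text{ so far}, \#N\text{ so far})$, the $i$-th $E$-block ends at the step where $x$ first reaches $a_1 + \cdots + a_i$, and the $i$-th $N$-block begins at the step where $y$ first exceeds $b_1 + \cdots + b_{i-1}$. The required inequality ``last label of $E$-block $i$ $<$ first label of $N$-block $i$'' says exactly that, at the moment $P$ takes its $(a_1+\cdots+a_i)$-th East step, it has already taken strictly fewer than $b_1 + \cdots + b_i$ North steps — equivalently, the lattice point reached immediately after that East step has $y$-coordinate at most $b_1 + \cdots + b_i$ with the appropriate strictness, which is precisely the condition that $P$ stays weakly below the staircase path $P_\rho := E^{a_1} N^{b_1} E^{a_2} N^{b_2} \cdots E^{a_n} N^{b_n}$. (This is the same characterization already used implicitly in the second Example after Theorem~\ref{thm: repeated density shifting}, and Figure~\ref{fig: k-Catalan interpretations}.) So the claim I will actually prove is: $\im(\psi_\rho) = \{ P \in \mathcal{P} : P \le P_\rho \}$, a principal order ideal — and principal order ideals are trivially downward closed, which gives the lemma at once.

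The key steps, in order: (1) unwind $\psi_\rho$ and reduce column-standardness to the $n$ pairwise inequalities between consecutive blocks described above; (2) show each such inequality is equivalent to a ``stay below the staircase'' constraint on $P$ at a specific $x$-coordinate, and that the conjunction over all $i$ is equivalent to $P \le P_\rho$ globally (here one uses that once $P$ dips below $P_\rho$ between two of the prescribed corner points, it cannot cross back above without failing the next checkpoint — a short monotonicity argument on partial sums); (3) conclude $\im(\psi_\rho)$ is the order ideal generated by $P_\rho$, hence downward closed, proving the lemma. The main obstacle I anticipate is step (2): being careful that the checkpoint inequalities at the $x$-values $a_1+\cdots+a_i$ really do force weak-below-ness at every intermediate $x$, not just at those finitely many abscissae. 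This is true because between consecutive checkpoints the staircase $P_\rho$ is flat (all $N$'s then all $E$'s) so $P$ can only fall further below it, never rise above; making that precise is the one spot needing genuine care, but it is elementary.
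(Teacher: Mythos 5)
Your proposal is correct in outline, but it takes a genuinely different route from the paper. The paper proves the lemma locally: it reduces to the case where $P_1$ covers $P_2$ (a single $NE \mapsto EN$ swap, say at steps $i$ and $i+1$), observes that in any preimage $T_1$ of $P_1$ the entry $i$ lies in the second row strictly to the left of the column containing $i+1$, and checks that transposing $i$ and $i+1$ produces a standard tableau mapping to $P_2$; the global description of $\im(\psi_\rho)$ is then obtained separately in Theorem~\ref{thm: tableaux vs lattice paths}, by combining the lemma with a minimal-index argument for $\im(\psi_\rho) \subseteq I$. You instead prove the full characterization $\im(\psi_\rho) = \lbrace P \in \mathcal{P} \ \vert \ P \leq P_{max}\rbrace$ directly: membership reduces to the column conditions (largest label of the $i$-th $E$-block smaller than smallest label of the $i$-th $N$-block), these translate into checkpoint inequalities on $P$, and monotonicity of the height function shows the checkpoints are equivalent to lying weakly below the staircase; the lemma then follows because a principal order ideal is downward closed. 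Your route buys more (it essentially subsumes Theorem~\ref{thm: tableaux vs lattice paths}), at the price of more delicate bookkeeping.

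That bookkeeping is exactly where your write-up needs repair. First, column-standardness compares cell $(1,i)$ with cell $(2,i)$, not $(1,i+1)$ with $(2,i)$; the inequality you actually use is the correct one, so this is only a mis-statement. Second, the correct checkpoint is that when $P$ completes its $(a_1+\cdots+a_i)$-th East step it has taken at most $b_1+\cdots+b_{i-1}$ North steps; your stated bound, ``strictly fewer than $b_1+\cdots+b_i$,'' is too weak whenever $b_i > 1$. For instance, with $\rho_{1,j}=1$, $\rho_{2,j}=2$ and $n=3$, the path $E N E N N N N E N$ satisfies your literal condition but is not weakly below $E N^2 E N^2 E N^2$ and is not in the image (its candidate filling puts $8$ at $(1,3)$ above $7$ at $(2,3)$). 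With the indices corrected, your monotonicity argument for forcing weak-below-ness at every intermediate abscissa is sound and the proof goes through.
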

\begin{proof}
We prove the statement for when $P_1$ directly covers $P_2$.  This corresponds to the situation where $P_2$ may be obtained from $P_1$ by replacing a single $NE$ subsequence with an $EN$ subsequence at the same position.  Assume that this $NE \mapsto EN$ replacement occurs at the $i$ and $i+1$ steps of both $P_1$ and $P_2$.  For $T_1 \in \svt(\lambda,\rho)$ with $\psi_\rho(T_1) = P_1$, the integer $i$ must appear in the second row of $T_1$ and $i+1$ must appear in the first row of $T_1$.  Column-standardness of $T_1$ implies that $i$ must appear in a more leftward column of $T_1$ than does $i+1$.  Then define $T_2 \in \svt(\lambda,\rho)$ to be the tableau obtained by flipping the positions of $i$ and $i+1$ in $T_1$.  As $i$ and $i+1$ are consecutive integers and since $i$ originally appeared left of $i+1$ in $T_1$, $T_2$ is row- and column-standard.  By construction, $\psi_\rho(T_2) = P_2$.
\end{proof}

For any two-row density $\rho$ with non-zero cell densities, there exists unique $T_{max} \in \svt(\lambda,\rho)$ such that, for all $j$, every integer in the $j^{th}$ column of $T_{max}$ is smaller than every integer in the $(j+1)^{st}$ column of $T_{max}$.  This is precisely the tableau such that $\psi_\rho(T_{max}) = E^{a_1} N^{b_1} \hdots E^{a_n} N^{b_n}$.  For any such $\rho$, the order ideal generated by $\psi_\rho(T_{max})$ will precisely correspond to $\im(\psi_\rho)$.

\begin{theorem}
\label{thm: tableaux vs lattice paths}
Fix $\lambda = n^2$ and density $\rho$ with $\rho_{1_j}=a_j > 0$ $\rho_{2,j}=b_j > 0$ for all $j$.  If we define $P_{max} \in \mathcal{P}$ by $P_{max} = E^{a_1} N^{b_1} \hdots E^{a_n} N^{b_n}$, then $\svt(\lambda,\rho)$ is in bijection with $I = \lbrace P \in \mathcal{P} \ \vert \ P \leq P_{max} \rbrace$.
\end{theorem}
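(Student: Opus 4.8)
The plan is to prove that the injection $\psi_\rho$ (established earlier in the section) restricts to a bijection onto $I$; equivalently, that $\im(\psi_\rho) = I$. One inclusion is essentially free: the paragraph preceding the theorem produces the distinguished tableau $T_{max}$ with $\psi_\rho(T_{max}) = P_{max}$, so $P_{max} \in \im(\psi_\rho)$, and Lemma~\ref{thm: tableaux vs lattice paths lemma} shows that $\im(\psi_\rho)$ is an order ideal of $\mathcal{P}$ (closed under going down). Hence $\im(\psi_\rho)$ contains every path $P \le P_{max}$, i.e.\ $I \subseteq \im(\psi_\rho)$. All the remaining work is the reverse inclusion: for every $T \in \svt(\lambda,\rho)$ I must check that $\psi_\rho(T) \le P_{max}$.

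For that, I would argue with height functions, reading $P_1 \le P_2$ as saying that at each integer abscissa $x$ the topmost point of $P_1$ on the vertical line $X = x$ is no higher than that of $P_2$ (with this convention $P_{max}$ is the pointwise-largest path having the prescribed run lengths $a_1, b_1, a_2, b_2, \dots$). By row-standardness the first-row entries of $T$, listed in increasing order, are exactly the contents of cell $(1,1)$, then cell $(1,2)$, and so on, so the $(a_1 + \cdots + a_j)$-th East step of $\psi_\rho(T)$ is made at the letter $m_j$ equal to the largest integer of cell $(1,j)$, and $m_j$ is the largest first-row entry appearing in columns $1, \dots, j$. The height of $\psi_\rho(T)$ at an abscissa just short of $a_1 + \cdots + a_{j+1}$ is the number of second-row entries smaller than $m_{j+1}$. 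The crucial observation is that every second-row entry sitting in a column $j' \ge j+1$ exceeds $m_{j+1}$: column-standardness makes it larger than every first-row entry of its own column $j'$, and row-standardness makes those at least the first-row entries of column $j+1$, hence in particular larger than $m_{j+1}$. Thus the second-row entries smaller than $m_{j+1}$ all lie in columns $1, \dots, j$, and there are at most $b_1 + \cdots + b_j$ of them. This is exactly the height of $P_{max}$ on the whole interval $a_1 + \cdots + a_j \le x < a_1 + \cdots + a_{j+1}$, so $\psi_\rho(T)$ stays weakly below $P_{max}$ there; the range $0 \le x < a_1$ is the $j = 0$ instance of the same computation, and the two paths share the endpoint $(\sum_j a_j, \sum_j b_j)$. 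Hence $\psi_\rho(T) \in I$.

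Putting the two inclusions together gives $\im(\psi_\rho) = I$ and the theorem. I expect the height estimate of the middle paragraph to be the only delicate point: one must fix a convention for the height of a path along a vertical run and keep straight which East step of $\psi_\rho(T)$ realizes each corner of $P_{max}$; once that bookkeeping is in place, the inequality falls out of row- and column-standardness alone, and everything else (injectivity of $\psi_\rho$, the order-ideal property of its image, and the existence of $T_{max}$) is quotable from the surrounding text.
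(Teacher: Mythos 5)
Your proposal is correct, and its first half coincides with the paper's: both obtain $I \subseteq \im(\psi_\rho)$ by quoting $\psi_\rho(T_{max}) = P_{max}$ together with Lemma~\ref{thm: tableaux vs lattice paths lemma}, which makes $\im(\psi_\rho)$ a down-closed set. Where you diverge is the reverse inclusion $\im(\psi_\rho) \subseteq I$. The paper argues by contradiction: it takes a hypothetical $T$ with $\psi_\rho(T) \nleq P_{max}$, locates the first step index $i$ at which $\psi_\rho(T)$ goes North while $P_{max}$ goes East from the same lattice point, compares the subtableaux on $\lbrace 1,\hdots,i-1\rbrace$ in $T$ and $T_{max}$, and shows the placement of $i$ in $T$ would violate column-standardness. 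You instead give a direct, quantitative verification: the $(a_1+\hdots+a_j)$-th East step of $\psi_\rho(T)$ occurs at $m_j$, the largest entry of cell $(1,j)$; row- and column-standardness force every second-row entry in columns $j+1,\hdots,n$ to exceed $m_{j+1}$, so at most $b_1+\hdots+b_j$ North steps precede the $(a_1+\hdots+a_{j+1})$-th East step, which is exactly the height of $P_{max}$ on the corresponding horizontal stretch. Both arguments ultimately rest on the same standardness inequality (second-row entries of later columns dominate first-row entries of column $j+1$), but your version is a uniform counting bound with no minimal-counterexample bookkeeping, and it arguably reads more cleanly than the paper's contradiction, whose phrasing about where $i$ sits in $T$ versus $T_{max}$ requires some care to parse; the paper's version, in exchange, pinpoints the exact cell at which a hypothetical violation of $P \leq P_{max}$ would break column-standardness. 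Your one loose phrase (``at least the first-row entries of column $j+1$'') should be ``strictly larger than'' to match the paper's strict row-standardness convention, but this does not affect the conclusion, and your convention of comparing topmost heights at each integer abscissa is a valid reading of the order $\leq$ on $\mathcal{P}$ for paths with common endpoints.
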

\begin{proof}
As $P_{max} = \psi_\rho(T_{max})$, $P_{max} \in \im(\psi_\rho)$ and Lemma \ref{thm: tableaux vs lattice paths lemma} immediately gives $I \subseteq \im(\psi_\rho)$.  Since $\psi_\rho$ is known to be injective, it is only left to show that $\im(\psi_\rho) \subseteq I$.

So assume by contradiction there exists $T \in \svt(\lambda,\rho)$ with $\psi_\rho(T) \nleq P_{max}$.  There exists a smallest index $i$ such that the $i^{th}$ steps of both $\psi_\rho(T)$ and $P_{max}$ begin at the same point, the $i^{th}$ step of $\psi_\rho(T)$ is a $N$ step, and the $i^{th}$ step of $P_{max}$ is an $E$ step.  This means that the subtableaux of $T$ and $T_{max}$ containing only $\lbrace 1,\hdots,i-1 \rbrace$ must have the same shape and density, while the integer $i$ lies in the first row of $T_{max}$ but in the second row of $T$.  If $i$ lies at position $(1,j)$ of $T$, the construction of $T_{max}$ then implies that every integer in the $(j-1)^{st}$ columns of both $T$ and $T_{max}$ is smaller than $i$.  It follows that $i$ must lie in the $(2,j)$ cell of $T$.  Yet then there must exist an entry at position $(1,j)$ of $T$ that is larger than $i$, implying that $T$ is not column-standard.
\end{proof}

\begin{example}
\label{ex: lattice path bijection, basic}
For $\lambda = n^2$ and row-constant density $\rho$ with $\rho_{1,j} = 1, \rho_{1,j} = k-1$, $\psi_\rho(T_{max}) = (E N^{k-1})^n$.  N-E lattice paths lying weakly below $\psi_\rho(T_{max})$ are in bijection with N-E lattice paths lying weakly below the line $y = (k-1)x$, recovering the bijection of Section \ref{sec: generalized Catalan numbers} between $\mathcal{D}_n^k$ and $\svt(n^2,\rho)$.
\end{example}

\begin{example}
\label{ex: lattice path bijection, tennis ball}
For $\lambda = (n+1)^2$ and row-constant density $\rho$ with $\rho_{1,j} = t$, $\rho_{2,j}=s-t$, $\psi_\rho(T_{max}) = (E^t N^{s-t})^{n-1}$.  This recovers the bijection between $\mathcal{B}_{s,t}(n)$ and N-E lattice paths utilized by de Mier and Noy \cite{MN}. 
\end{example}

As a more involved example, we use Theorem \ref{thm: tableaux vs lattice paths} to derive a new combinatorial interpretation of the rational Catalan numbers in terms of standard set-valued Young tableaux.  For relatively prime positive integers $a$ and $b$, there exists a rational Catalan number $C(a,b) = \frac{1}{a+b} \binom{a+b}{a}$.  As originally shown by Bizley \cite{Bizley} and extended by Grossman \cite{Grossman}, the rational Catalan numbers equal the number of rational Dyck paths of shape $(a,b)$, by which we mean N-E lattice paths from $(0,0)$ to $(a,b)$ that lie weakly below the line of rational slope $y = \frac{b}{a}x$.

Applying Theorem \ref{thm: tableaux vs lattice paths} merely requires the identification of a unique maximal lattice path $P_{max}$ among the set of all rational Dyck paths of shape $(a,b)$.  The path $P_{(a,b)} = E^1 N^{c_1} \hdots E^1 N^{c_a}$ with $c_i = \floor{\frac{bi}{a}} - \floor{\frac{b(i-1)}{a}}$ satisfies this condition, as $\sum_{i=1}^k c_i = \floor{bk}{a}$ for all $1 \leq k \leq a$ and $P_{(a,b)}$ has a Northwest corner at the first integer lattice point below the intersection of $y = \frac{b}{a}x$ with $x=k$ for every $1 \leq k \leq a$.

\begin{corollary}
\label{thm: rational Catalan numbers}
Take positive integers $a,b$ such that $\gcd(a,b) = 1$.  Then $\vert \svt (a^2,\rho) \vert = C(a,b) = \frac{1}{a+b} \binom{a+b}{a}$ for the density $\rho$ with $\rho_{1,j} = 1$, $\rho_{2,j} = \floor{\frac{bj}{a}} - \floor{\frac{b(j-1)}{a}}$.
\end{corollary}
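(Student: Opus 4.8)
The plan is to apply Theorem~\ref{thm: tableaux vs lattice paths} once the maximal lattice path has been correctly identified, exactly as the surrounding discussion sets up. By the theorem of Bizley~\cite{Bizley}, extended by Grossman~\cite{Grossman}, the rational Catalan number $C(a,b)$ counts the rational Dyck paths of shape $(a,b)$: the N-E lattice paths from $(0,0)$ to $(a,b)$ lying weakly below the line $y = \frac{b}{a}x$. So it suffices to produce a bijection between $\svt(a^2,\rho)$ and this family of paths.

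First I would verify that $P_{(a,b)} = E^1 N^{c_1} E^1 N^{c_2} \cdots E^1 N^{c_a}$, with $c_i = \floor{\frac{bi}{a}} - \floor{\frac{b(i-1)}{a}}$, is both a rational Dyck path of shape $(a,b)$ and the one dominating every other. The engine is the telescoping identity $\sum_{i=1}^{k} c_i = \floor{\frac{bk}{a}}$ for all $0 \le k \le a$; taking $k = a$ confirms the endpoint is $(a,b)$. Thus after its $k$th East step $P_{(a,b)}$ sits at height $\floor{\frac{bk}{a}} \le \frac{b}{a}k$, so $P_{(a,b)}$ lies weakly below the line and is a rational Dyck path. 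Conversely, any rational Dyck path $P$ of shape $(a,b)$ reaches, after its $k$th East step, an integer height at most $\frac{b}{a}k$ and hence at most $\floor{\frac{bk}{a}}$; since $P$ ends at the same point $(a,b)$ as $P_{(a,b)}$, this forces $P \le P_{(a,b)}$ in $\mathcal{P}$. Hence the rational Dyck paths of shape $(a,b)$ are exactly the elements of the order ideal generated by $P_{(a,b)}$.

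It remains to connect this to set-valued tableaux. For the density $\rho$ with $\rho_{1,j} = 1$ and $\rho_{2,j} = c_j$ we have $\sum_j \rho_{1,j} = a$ and $\sum_j \rho_{2,j} = \sum_j c_j = b$, and by the description of $\psi_\rho(T_{max})$ recalled before Theorem~\ref{thm: tableaux vs lattice paths} we get $\psi_\rho(T_{max}) = P_{(a,b)}$. Some of the $c_j$ may vanish (this happens precisely when $b < a$), so I would invoke Theorem~\ref{thm: tableaux vs lattice paths} in its extended form allowing cells of zero density; this is legitimate since $\rho_{1,j} = 1 > 0$ for every $j$, so there is no index $j$ with $\rho_{2,j} = \rho_{1,j+1} = 0$. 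The theorem then yields a bijection between $\svt(a^2,\rho)$ and $\{ P \in \mathcal{P} \mid P \le P_{(a,b)} \}$, which by the previous paragraph is the set of rational Dyck paths of shape $(a,b)$. Therefore $\vert \svt(a^2,\rho) \vert = C(a,b) = \frac{1}{a+b}\binom{a+b}{a}$.

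The only real work lies in the middle step --- showing $P_{(a,b)}$ is simultaneously a rational Dyck path and the dominating one. This reduces to the floor identity $\sum_{i=1}^k c_i = \floor{\frac{bk}{a}}$ plus the elementary observation that an N-E path staying weakly below a line of slope $\frac{b}{a}$ must, at each integer $x$-coordinate, have height no larger than the floor of the line's value there. Coprimality of $a$ and $b$ plays no role in this part of the argument; it enters only through Bizley's and Grossman's closed form, which I would invoke as a black box.
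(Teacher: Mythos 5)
Your proposal is correct and follows essentially the same route as the paper: identify $P_{(a,b)} = E^1 N^{c_1} \cdots E^1 N^{c_a}$ as the unique maximal rational Dyck path via the telescoping identity $\sum_{i=1}^{k} c_i = \floor{\frac{bk}{a}}$, then apply Theorem \ref{thm: tableaux vs lattice paths} together with the Bizley--Grossman count. Your explicit handling of the possible zero densities $c_j = 0$ (when $b < a$) via the paper's stated extension is a small point of added care that the paper leaves implicit, but the argument is otherwise identical.
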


\end{document}